    \newtheorem{Lem}{Lemma}[section]
    \newtheorem{Prop}[Lem]{Proposition}
    \newtheorem{Thm}[Lem]{Theorem}  
    \newtheorem{Cor}[Lem]{Corollary}
\theoremstyle{definition}
    \newtheorem{Def}[Lem]{Definition}
    \newtheorem{Exa}[Lem]{Example}
    \newtheorem{Rem}[Lem]{Remark}
    \newtheorem{Not}[Lem]{Notation}
\newcommand{\CM}{\mathcal{M}}
\renewcommand{\P}{\mathbb{P}}
\newcommand{\Z}{\mathbb{Z}}
\newcommand{\pg}{\pi_g}
\newcommand{\pgp}{\pg^+}
\newcommand{\pgpl}{$\pgp$-length}
\newcommand{\pgm}{\pg^-}
\newcommand{\pgml}{$\pgm$-length}
\newcommand{\pd}{\pi_d}
\newcommand{\pdl}{$\pd$-length}
\newcommand{\Pnd}{{\P^{N_d}_k}}
\newcommand{\Pndz}{{\P^{N_d}_\Z}}
\newcommand{\bp}{\P^2_k}
\newcommand{\bpz}{\P^2_\Z}
\newcommand{\bpdz}{{\bpz}^\vee}
\newcommand{\bpnd}{{\P^n_k}^\vee}
\newcommand{\bpcand}{{\P^{g-1}_k}^\vee}
\newcommand{\glz}{\mathrm{GL}_{3,\Z}}
\DeclareMathOperator{\ac}{H}
\DeclareMathOperator{\hess}{Hess}
\DeclareMathOperator{\spec}{Spec}
\DeclareMathOperator{\mino}{minors}
\DeclareMathOperator{\Jac}{Jac}
\DeclareMathOperator{\Pic}{Pic}
\newcommand{\aand}{\qquad{\textrm{and}}\qquad}
\newcommand{\due}[1]{\ddot{#1}}
\newcommand{\Mg}{{{\CM}_g}}
\newcommand{\Mgdue}{\due{\CM}_g}
\newcommand{\Mgu}{{{\CM}_{g,1}}}
\newcommand{\Sg}{{\mathcal{S}}_g}
\newcommand{\Sgdue}{\due{\mathcal{S}}_g}
\newcommand{\Sgp}{\Sg^+}
\newcommand{\Sgpdue}{\due{\mathcal{S}}_g^+}
\newcommand{\Sgm}{\Sg^-}
\newcommand{\Sgmdue}{\due{\mathcal{S}}_g^-}
\newcommand{\Scan}{\Sg^{\textrm{can}}}
\newcommand{\Scano}{\Sg^{\textrm{can},\circ}}
\newcommand{\Snc}{\Sg^{\textrm{nc}}}
\title[Complex, tropical, positive characteristic]{Complex and tropical counts via positive characteristic}
\author[Pacini]{Marco Pacini}
\address{Instituto de Matem\'atica, Universidade Federal Fluminense, 
Rio de Janeiro, Brazil}
\email{pacini.uff@gmail.com}
\author[Testa]{Damiano Testa}
\address{Mathematics Institute, University of Warwick,
Coventry, CV4 7AL, UK}
\email{adomani@gmail.com}
\subjclass[2010]{
14H50, 
14G17, 
14E22, 
14N10. 
}
\keywords{Plane and canonical curves,
positive characteristic,
length and ramification,
classical enumerative problems.}
\begin{document}

\maketitle

\begin{abstract}
We reconcile the discrepancy between the complex and tropical counts of some enumerative problems reducing to positive characteristic.  Each problem that we consider suggests a prime with special behaviour.  Modulo this prime, the solutions coalesce in uniform clusters: at this special prime, the geometric and the tropical behaviours match.  As examples, we concentrate on inflection points of plane curves and theta-hyperplanes of canonical curves.
\end{abstract}

\section{Introduction}

This paper started with our desire to understand the difference between some enumerative problems over the complex numbers and their tropical analogues: we found that by suitably choosing the characteristic of the field, we can mimick the tropical behaviour geometrically.  For us, tropical geometry was just the inspiring motivation: our arguments use mainly techniques from algebraic geometry.  It would be interesting to see if the techniques that we develop here could be adapted to the tropical setting and yield a systematic definition of combinatorial multiplicity.

For a specific example, we illustrate the situation with the case of inflection points to plane cubics (Section~\ref{s:inf}).  Classically, a smooth plane cubic over the complex numbers admits exactly~$9$ inflection points.  The tropical analogue of the problem of inflection points of plane cubics yields~$3$ tropical inflection points.  While the tropical count does not match the complex count, it does match the number of inflection points of a general plane cubic over a field of characteristic~$3$.  Hence, the geometric intuition that better adapts to the tropical problem of inflection points comes from fields of characteristic~$3$.  More generally, the number of inflection points of a general plane curve of degree~$d$ is $3d(d-2)$ over a field of characteristic different from~$3$, while it is $d(d-2)$ both over fields of characteristic~$3$ and tropically (see~\cite{BL}).  In an unrelated generalization, the inflection points of plane cubics are connected to points of order~$3$ on elliptic curves.  Again, for a prime~$p$, the $p$-torsion points of a tropical abelian variety are modelled more closely by the $p$-torsion points on an abelian variety over a field of characteristic~$p$, than they are by the $p$-torsion points over a field of characteristic different from~$p$.  We will not pursue this generalization.

Similarly, the number of bitangent lines to a plane quartic over a field of characteristic different from~$2$ is~$28$ (Section~\ref{s:titc}).  Both over a field of characteristic~$2$ and tropically, a general plane quartic admits~$7$ bitangent lines.  For background on bitangents to plane quartics in characteristic~$2$, see \cites{der0,der,wall,narr}; for tropical bitangents, see \cites{bitr,llb,lm}.  Analogous results hold, more generally, for canonical curves of arbitrary genus and their theta-hyperplanes.  For background on theta-characteristics in characteristic two, see~\cite{sv}, for tropical theta-characteristics, see \cites{zh,jl}.

We focus on problems where the count over the complex numbers differs from the tropical count.  Our initial observation is that, in all cases that we consider, the tropical count matches the enumerative count {\emph{over a field of a specially chosen prime characteristic}}, rather than over the complex numbers.  Table~\ref{t:num} summarizes the examples that caught our attention.

\begin{table}[h]
    \centering
\begin{tabular}{|>{\centering\arraybackslash}m{135pt}|>{\centering\arraybackslash}m{55pt}|>{\centering\arraybackslash}m{45pt}|>{\centering\arraybackslash}m{38pt}|}
\hline
& Complex count & Tropical count & Special prime \\
\hline
Inflection points of a general plane curve of degree~$d$
& $3d(d-2)$ & $d(d-2)$ & $3$ \\[5pt]
\hline
Bitangents of a general plane quartic
&
$28$ & $7$ & $2$ \\[5pt]
\hline
Theta-hyperplanes of a general canonical curve of genus~$g$
&
$2^{g-1}(2^g-1)$ & $2^g-1$ & $2$ \\[5pt]
\hline
\end{tabular}
\vspace{5pt}
    \caption{Examples of enumerative problems}
    \label{t:num}
\end{table}

Lemmas~\ref{l:pl3} and~\ref{l:lu2} highlight the presence of a special characteristic modulo which there is a concentration of degree.  We view these lemmas as different manifestations of the same principle: the geometric count in positive characteristic takes into account the ramification contribution in the reduction from characteristic zero to positive characteristic.  Our local analysis shows that this can be justified by the fact that some morphism becomes inseparable in a special characteristic.

There is a significant overlap between the enumerative problems that we analyze and the ones that Harris considers in~\cite{har} over the complex numbers.  The argument in characteristic zero is based on the computation of the relevant monodromy groups.  It is interesting to note that we also rely on some monodromy information, via a result of Ekedahl, see Theorem~\ref{t:eke}.

A basic concept that plays an important role for us is the length of a zero-dimensional irreducible component of a fiber of a morphism.  Let $\pi \colon X \to Y$ be a morphism of schemes and let~$x$ be a geometric point of~$X$.  Assume that the irreducible component of the scheme-theoretic fiber $\pi^{-1}(\pi(x))$ of~$\pi$ containing~$x$ has dimension~$0$ and hence only has the point~$x$ in its support.  We isolate the following concept, since it plays a crucial role for us.

\begin{Def} \label{d:lun}
The {\emph{$\pi$-length of~$x$}} is the length of the irreducible component of the scheme-theoretic fiber $\pi^{-1}(\pi(x))$ containing~$x$.
\end{Def}

Depending on context, what we call $\pi$-length is also called {\emph{ramification index}}, {\emph{multiplicity}}, {\emph{weight}}, and so on.

We give here a heuristic description of where our argument hinges, again in the case of inflection points of plane curves.  Form the two incidence correspondences $(C,p)$ and $(C,\ell)$ consisting of a plane curve $C \subset \mathbb{P}^2$ and either an inflection point~$p$ or an inflection line~$\ell$.  The Gauss map is the morphism that assigns to a pair $(C,p)$ the pair $(C,\mathcal{T}_{C,p})$, where $\mathcal{T}_{C,p}$ is the tangent line (actually the inflection line) to~$C$ at~$p$.  For a fixed general curve~$C$ defined over an algebraically closed field, the Gauss map is a bijection of finite sets.  If the characteristic of the ground field is different from~$3$, then the Gauss map is not only bijective on points, but is, in fact, an isomorphism of finite schemes.  However, if the characteristic of the ground field is~$3$, then the Gauss map is inseparable of degree~$3$.  This justifies why the number of inflection points on a general plane curve of degree~$d$ in~$\mathbb{P}^2$ over a field of characteristic~$3$ is one third of the number over a field of characteristic different from~$3$.

The situation is analogous in the case of bitangents to plane quartics: the two points of tangency divide by~$2^2$ the cardinality of the set of bitangent lines, passing from fields of characteristic different from~$2$ to fields of characteristic~$2$.  In the case of theta-hyperplanes of a canonical curve of genus~$g$, the factor is $2^{g-1}$, since there are $g-1$ tangency conditions.

In the final section, we show how the method used in this paper, could potentially be used in several other situations.  This strategy could lead to proving that certain primes divide the numbers of solutions to enumerative problems over the complex numbers.

\subsubsection*{Acknowledgements}
The authors would like to thank Michel van Garrel, Diane Maclagan, Miles Reid, Israel Vainsencher for helpful discussions at various stages of this project.  In the process of writing this article, we used extensively the computer program Magma~\cite{magma}: this computational tool proved immensely helpful.

\section{Inflection points of plane curves}
\label{s:inf}

In this section, we study the inflection points of a plane curve, with special attention to the case of fields of characteristic~$3$.  Our starting point is an adaptation of Proposition~1.6 of~\cite{par}.

Let~$n$ be a non-negative integer, let~$R$ be a commutative ring with identity and set $R_n = R[x_0 , \ldots , x_n]$, the polynomial ring over~$R$ in $n+1$ variables.  Let $F \in R_n$ be a polynomial.  For $i \in \{ 0 , \ldots , n\}$, we recall the definition of the second Hasse derivative of~$F$ with respect to the $i$-th variable~$x_i$, that we denote by~$F_{\frac{x_ix_i}{2}}(x)$.  View~$F$ as a polynomial in the variable~$x_i$ with coefficients depending on the remaining variables and write
\[
F(x_0,\ldots,x_n) = \sum_j F_j(x_0,\ldots,\check{x_i}, \ldots , x_n) x_i^j.
\]
Define
\[
F_{\frac{x_ix_i}{2}}(x_0,\ldots,x_n) = \sum_j \binom{j}{2}F_j(x_0,\ldots,\check{x_i}, \ldots , x_n) x_i^{j-2}
.
\]
We observe that the identity $2F_{\frac{x_ix_i}{2}} = \frac{{\partial}^2 F}{{\partial} x_i^2}$ holds.  To simplify the notation, if $i,j \in \{0,\ldots,n\}$ are distinct indices, then we denote the usual second derivative $\frac{{\partial}^2 F}{{\partial} x_i x_j}$ by $F_{\frac{x_ix_j}{2}}$.  In our application, Hasse derivatives allow us to treat the case of fields of characteristic~$2$ uniformly.

Let $S= R_n[t_0 , \dots , t_n]$ be a polynomial ring over~$R_n$ in $n+1$ further variables and set
\[
x = (x_0 , \ldots , x_n) , \quad
t = (t_0 , \ldots , t_n)
\aand
x+t = (x_0 + t_0 , \ldots , x_n + t_n).
\]
Combining the second Hasse derivatives with Taylor expansion, we obtain the identity
\[
F(x+t)
=
F(x) + 
\sum _{i=0}^n F_{x_i}(x) t_i + 
\sum_{0 \leq i \leq j \leq n} F_{\frac{x_ix_j}{2}}(x) t_it_j + 
G,
\]
where $G \in S$ is a polynomial in the cube of the ideal generated by $t_0,\ldots,t_n$.  Set
\begin{eqnarray*}
\nabla_x F \cdot t
& = &
\sum _{i=0}^n F_{x_i}(x) t_i, \\
\hess_x F (t)
& = &
\sum_{0 \leq i \leq j \leq n} F_{\frac{x_ix_j}{2}}(x) t_it_j,
\end{eqnarray*}
so that the congruence
\[
F(x+t) \equiv F(x) + \nabla_x F \cdot t + \hess_x F (t)
\mod{(t_0,\ldots,t_n)^3}
\]
holds.

Denote by~$\bpdz$ the projective plane dual to~$\bpz$ and choose homogeneous coordinates $\ell_0,\ell_1,\ell_2$ on $\bpdz$ that are dual to the coordinates $x_0,x_1,x_2$ on~$\bpz$.  Thus, if $\ell \subset \bpz$ is a line, then we identify~$\ell$ with the point $[\ell_0,\ell_1,\ell_2] \in \bpdz$ and~$\ell$ is the line with equation
\begin{equation} \label{e:eql}
\ell \colon \quad
\ell_0 x_0 + \ell_1 x_1 + \ell_2 x_2 = 0.
\end{equation}
We denote by $\mathcal{F} \subset \bpz \times \bpdz$ the closed subscheme defined by Equation~\eqref{e:eql}.  The scheme~$\mathcal{F}$ consists of pairs $(p,\ell)$ consisting of a point~$p$ in the projective plane and a line~$\ell$ containing~$p$.  We call~$\mathcal{F}$ the {\emph{flag variety}}.

Associated to the line~$\ell$, we also find it convenient to introduce the set $\ell^\perp$ defined as
\[
\ell^\perp = \left\{
\begin{array}{l}
(0 , \ell_2 , - \ell_1) , \\[4pt]
(- \ell_2 , 0, \ell_0) , \\[4pt]
(\ell_1 , - \ell_0 , 0)
\end{array}
\right\}.
\]
The set~$\ell^\perp$ consists of vectors spanning the kernel of the linear equation~\eqref{e:eql} determined by~$\ell$.

We set $N_d = \binom{d+2}{2}-1$, the dimension of the projective space of plane curves of degree~$d$, that we identify with the non-zero forms of degree~$d$, up to scaling, that vanish on the corresponding curve.

\begin{Def}
The {\emph{universal inflection flag}} is the closed subscheme $\Gamma_d$ in $\Pndz \times \bpz \times \bpdz$ defined by
\[
\Gamma_d =
\left\{
(F,p,\ell) \in \Pndz \times \bpz \times \bpdz \quad \left| \quad
\begin{array}{l}
\displaystyle
p \in \ell, \quad F(p)=0 , {\textrm{ and,}}
\\[5pt]
\displaystyle
{\textrm{for all }} v \in \ell^\perp, \\[5pt]
\displaystyle
\nabla_p F \cdot v = 0 , \quad
\hess_p F (v) = 0 .
\end{array}
\right.
\right\}.
\]
We also define
\begin{itemize}
\item
the {\emph{universal inflection point}} as the image~$P_d$ of~$\Gamma_d$ under the projection $\pi_{12} \colon \Gamma_d \to \Pndz \times \bpz$, and
\item
the {\emph{universal inflection line}} as the image~$L_d$ of~$\Gamma_d$ under the projection $\pi_{13} \colon \Gamma_d \to \Pndz \times \bpdz$.
\end{itemize}
\end{Def}

The commutative diagram~\eqref{e:losa}, where all the arrows are standard projections, summarizes and extends the notation.

\begin{equation} \label{e:losa}
\xymatrix{
& \Gamma_d \ar[rr]^{\pi_{23}} \ar[dl]_{\pi_{12}} \ar[dr]^{\pi_{13}} \ar[dd]^{\pi_{1}} & & \mathcal{F} \\
P_d \ar[dr]_{\pd} & & L_d \ar[dl]^{\pi_f} \\
& \Pndz
}
\end{equation}

\begin{Rem}
We can use Diagram~\eqref{e:losa} to explain the difference between inflection points and inflection lines.  Let~$k$ be a field and denote by $(\Gamma_d)_k$, $(P_d)_k$ and $(L_d)_k$ the base-change to $\spec k$ of the schemes $\Gamma_d$, $P_d$ and $L_d$.  The morphism $(\Gamma_d)_k \to (P_d)_k$ is birational, regardless of the field~$k$.  The morphism $(\Gamma_d)_k \to (L_d)_k$ is birational if and only if the characteristic of~$k$ is different from~$3$.  If the characteristic of~$k$ is~$3$, then the scheme~$(L_d)_k$ is generically non-reduced and the morphism $(\Gamma_d)_k \to (L_d)_{k,{\mathrm{red}}}$ between reduced induced schemes is purely inseparable of degree~$3$.
\end{Rem}

\begin{Def}
Let~$d$ be a positive integer, let~$k$ be a field and let $F \in \Pnd$ correspond to the plane curve $C \subset \mathbb{P}^2_k$ of degree~$d$.  The {\emph{inflection flag scheme of~$C$}} is the pull-back~$\mathcal{F}_C \subset \mathcal{F}$ of~$\pi_1$ over the $k$-valued point~$F$.
\end{Def}

\begin{Exa} \label{ex:if3}
Let~$k$ be a field of characteristic~$3$ and let $E \subset \mathbb{P}^2_k$ be the plane cubic with equation
\[
F_E = x_0x_1x_2 + (x_0-x_1)^3 = 0.
\]
The equations of the inflection flag scheme~$\mathcal{F}_E$ of~$E$ are
\[
\mathcal{F}_E \colon \qquad
\left\{
\begin{array}{rcl}
\ell_0 x_0 + \ell_1 x_1 + \ell_2 x_2
& = & 0 , \\[5pt]
x_0x_1x_2 + (x_0-x_1)^3
& = & 0 , \\[5pt]
2 \times 2 {\textrm{ minors of }}
\begin{pmatrix}
\ell_0 & \ell_1 & \ell_2 \\
x_1x_2 & x_0x_2 & x_0x_1
\end{pmatrix}
& = & 0 , \\[9pt]
\ell_1 \ell_2 x_0
\; \; \; = \; \; \;
\ell_0 \ell_2 x_1
\; \; \; = \; \; \;
\ell_0 \ell_1 x_2
& = & 0 .
\end{array}
\right.
\]
An easy, direct calculation shows that~$\mathcal{F}_E$ consists of the three points
\begin{itemize}
\item
$([1,1,0],[0,0,1])$ with one-dimensional Zariski tangent space;
\item
$([0,0,1],[1,0,0])$ and $([0,0,1],[0,1,0])$, both with two-dimensional Zariski tangent space.
\end{itemize}
The multiplicity of each one of the three points in the support of~$\mathcal{F}_E$ is~$3$.  The point $[1,1,0]$ is smooth on~$E$ and the tangent line to~$E$ at such point has intersection multiplicity~$3$ with~$E$.
\end{Exa}

In the next lemma, we compute the $\pi_{13}$-length (Definition~\ref{d:lun}) of certain points of~$\Gamma_d$.

\begin{Lem} \label{l:pl3}
Let $(F,p,\ell) \in \Gamma_d$ be a $k$-valued point and let $C \subset \mathbb{P}^2_k$ be the curve with equation $F=0$.  Assume that~$C$ contains no line through~$p$ and that the intersection multiplicity of~$C$ and~$\ell$ at~$p$ is~$3$.  The $\pi_{13}$-length~$\lambda_{(F,p,\ell)}$ of $(F,p,\ell)$ is
\begin{itemize}
\item
$\lambda_{(F,p,\ell)} = 1$, if the characteristic of~$k$ is different from~$3$;
\item
$\lambda_{(F,p,\ell)} = 3$, if the characteristic of~$k$ is~$3$.
\end{itemize}
\end{Lem}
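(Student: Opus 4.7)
The plan is to localise around $(F,p,\ell)$, rewrite the defining equations of $\Gamma_d$ as an ideal in one affine variable along $\ell$, and compute its colength.

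First I would choose coordinates on $\bpz$ so that $p=[1,0,0]$ and $\ell=\{x_2=0\}$, passing to the affine chart $x_0=1$ with affine coordinates $a=x_1/x_0$ and $b=x_2/x_0$, in which $p$ becomes the origin and $\ell$ the line $\{b=0\}$. Set $g(a):=F(1,a,0)$. The hypothesis that $C$ does not contain $\ell$ gives $g\neq 0$, and the intersection-multiplicity hypothesis yields $g(a)=a^3 h(a)$ with $h(0)\neq 0$.

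Next I would write out the defining equations of the scheme-theoretic fiber of $\pi_{13}$ over $(F,\ell)$ in the completed local ring at $p$. Since $p'\in\ell$ translates to $b=0$, after cutting by $b=0$ we work in $k[[a]]$; the remaining equations are $F(1,a,0)=0$ together with the two gradient and three Hessian (Hasse) conditions obtained by letting $v$ range over a basis of $\ell^{\perp}$ (say $(1,0,0)$ and $(0,1,0)$). The key reduction is to identify this ideal with
\[
I=\bigl(g(a),\,g'(a),\,g_{\frac{aa}{2}}(a)\bigr).
\]
I would prove this by expanding both sides of the tautological identity
\[
F(1+t,a+s,0)=(1+t)^d g\!\left(\frac{a+s}{1+t}\right)
\]
to second order in the nilpotents $s,t$, reading off each partial Hasse derivative of $F$ at $(1,a,0)$ (for instance $F_{x_1}=g'$, $F_{\frac{x_1x_1}{2}}=g_{\frac{aa}{2}}$, $F_{x_0}=dg-ag'$, and so on) as an explicit combination of $g$, $g'$ and $g_{\frac{aa}{2}}$. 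The virtue of this identity over invoking Euler-type relations separately is that it avoids spurious factors of~$2$ and therefore works uniformly in any characteristic.

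Finally I would carry out the one-variable length computation for $k[[a]]/I$. Writing $g(a)=\sum_{j\geq 3}c_j a^j$ with $c_3=h(0)\neq 0$, the Hasse derivative is $g_{\frac{aa}{2}}(a)=\sum_{j\geq 3}\binom{j}{2}c_j a^{j-2}$. If $\car(k)\neq 3$, the leading coefficient $\binom{3}{2}c_3=3c_3$ is a unit, so $I\supseteq(a)$ and the length at $p$ equals~$1$. If $\car(k)=3$, both $g'(a)=3a^2h(a)+a^3h'(a)=a^3h'(a)$ and $g_{\frac{aa}{2}}(a)$ lie in $(a^3)$, while $g(a)=a^3h(a)$ already generates $(a^3)$; hence $I=(a^3)$ and the length at $p$ equals~$3$. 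The main obstacle is the intermediate reduction to $I$: the Taylor--Hasse identity gives a characteristic-free way to handle the cross-term $F_{x_0x_1}(1,a,0)=(d-1)g'-2a\,g_{\frac{aa}{2}}$ and to avoid any appeal to dividing by~$2$, which would otherwise jeopardise the case $\car(k)=2$ of the statement.
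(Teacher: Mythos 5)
Your proposal is correct and follows essentially the same route as the paper's proof: normalize coordinates so that $p$ and $\ell$ are standard, use the multiplicity hypothesis to factor the restriction of $F$ to $\ell$ as $a^3h(a)$ with $h(0)\neq 0$ (the paper writes $F = x_0G + x_1^3H$ in bihomogeneous form), and observe that the fiber equations of $\pi_{13}$ cut out the ideal $\bigl(a^3,\,3a\cdot(\mathrm{unit})\bigr)$ in the local ring at $p$, giving length $1$ or $3$ according to whether $3$ is invertible. The only discrepancy is harmless: the paper's $\Gamma_d$ imposes the Hessian condition only at the individual vectors of the finite set $\ell^{\perp}$, so there are two, not three, Hasse--Hessian equations and no cross-term $F_{x_0x_1}=0$; since that cross-term lies in the ideal generated by $g'$ and $g_{\frac{aa}{2}}$ anyway, your ideal $I$ and the resulting length computation are unaffected.
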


\begin{proof}
Choose homogeneous coordinates $x_0,x_1,x_2$ on~$\bp$ so that~$p$ is the point $[0,0,1]$ and~$\ell$ is the line with equation $\ell \colon x_0 = 0$.  Since the intersection multiplicity of~$C$ and the line $x_0=0$ is~$3$, there are forms $G(x_0,x_1,x_2) \in k[x_0,x_1,x_2]$ and $H(x_1,x_2) \in k[x_1,x_2]$ with $H(0,1) \neq 0$ such that we can write
\[
F(x_0,x_1,x_2) = x_0 G(x_0,x_1,x_2) + x_1^3 H(x_1,x_2).
\]
The equations of the scheme-theoretic fiber~$\Pi$ of~$\pi_{13}$ at the point $(F,\ell)$ are
\[
\Pi \colon \quad
\left\{
\begin{array}{rcccl}
0 & = & x_0, \\[5pt]
0 & = & x_0 G & + & x_1^3 H,
\\[5pt]

0 & = & x_0 G_{x_1} & + & x_1^{2} \left(
3 H + x_1 H_{x_1} \right) ,
\\[5pt]
0 & = & x_0 G_{x_2} & + & x_1^3 H_{x_2},
\\[5pt]
0 & = & x_0 G_{\frac{x_1x_1}{2}} & + & x_1 \left(
3 H + 3 x_1 H_{x_1} + x_1^2 H_{\frac{x_1x_1}{2}} \right) ,
\\[5pt]
0 & = & x_0 G_{\frac{x_2x_2}{2}} & + & x_1^3 H_{\frac{x_2x_2}{2}}.
\end{array}
\right.
\]
The first two equations imply the identity $x_1^3 H = 0$.  Since~$H$ does not vanish at~$p$, we deduce that~$x_1^3$ is in the ideal of the irreducible component~$(\Pi)_p$ of~$\Pi$ supported at~$p$.  Thus, the scheme~$(\Pi)_p$ is defined by the equations
\[
(\Pi)_p \colon \quad
x_0 = x_1^3 = 3 x_1 = 0.
\]
The result follows.
\end{proof}

\begin{Lem}\label{l:girri}
The projection map $\pi_{23} \colon \Gamma_d \longrightarrow \mathcal{F}$ defined by $(F,p,\ell) \mapsto (p,\ell)$ is smooth and its fibers are linear subspaces of~$\Pndz$ of codimension~$3$.
\end{Lem}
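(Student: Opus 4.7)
The defining equations of $\Gamma_d$ inside $\Pndz \times \mathcal{F}$, namely $F(p)=0$, $\nabla_p F \cdot v = 0$ and $\hess_p F(v) = 0$ for $v \in \ell^\perp$, are all linear in the coefficients of $F$ for fixed $(p,\ell)$. Hence, viewed through the first projection, each scheme-theoretic fiber of $\pi_{23}$ is automatically a linear subscheme of $\Pndz$. The real content is to show that this linear subspace has codimension exactly $3$ at every point of $\mathcal{F}$, after which smoothness follows.

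For the codimension I would use the natural $\glz$-equivariance of diagram~\eqref{e:losa}: the standard action of $\glz$ on $\bpz$ extends compatibly to $\bpdz$, $\mathcal{F}$, $\Pndz$ and $\Gamma_d$, making $\pi_{23}$ equivariant, and acts transitively on each geometric fibre of $\mathcal{F}\to\spec\Z$. So it is enough to verify the codimension at one convenient point: take $p_0 = [0,0,1]$ and $\ell_0 \colon x_0 = 0$, so that $\ell_0^\perp$ reduces, apart from the zero vector, to $(0,0,1)=p_0$ and $(0,-1,0)$. Writing $F = \sum a_{ijk} x_0^i x_1^j x_2^k$, the formulas for Hasse derivatives give
\[
F(p_0) = a_{0,0,d}, \quad F_{x_1}(p_0) = a_{0,1,d-1}, \quad F_{\frac{x_1x_1}{2}}(p_0) = a_{0,2,d-2},
\]
while the conditions arising from $v=(0,0,1)$ produce the multiples $d\,a_{0,0,d}$ and $\binom{d}{2}a_{0,0,d}$ (via Euler's identity), which lie in the ideal generated by $a_{0,0,d}$ and so are redundant. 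The three independent linear conditions $a_{0,0,d}=a_{0,1,d-1}=a_{0,2,d-2}=0$ remain, giving codimension exactly $3$ in $\Pndz$ scheme-theoretically, uniformly over $\spec\Z$.

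To conclude smoothness I would package the fibers into a projective-bundle structure over $\mathcal{F}$. Let $V = H^0(\bpz,\mathcal{O}(d))$ and let $E$ be the rank-$3$ vector bundle on $\mathcal{F}$ whose fibre at $(p,\ell)$ is the space of $2$-jets $\mathcal{O}_{\ell}(d)/\mathfrak{m}_p^3 \mathcal{O}_{\ell}(d)$. Restriction of $F$ to $\ell$ followed by truncation at $p$ defines a morphism $\varphi\colon V\otimes\mathcal{O}_{\mathcal{F}}\to E$ of vector bundles on $\mathcal{F}$ whose pointwise kernels cut out the fibers of $\pi_{23}$. The computation above shows $\varphi$ is surjective at $(p_0,\ell_0)$; by $\glz$-equivariance and geometric transitivity on $\mathcal{F}$, it is surjective everywhere. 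Hence $\ker\varphi$ is a locally free subsheaf of rank $N_d-2$, and $\Gamma_d = \P(\ker\varphi)$ inside $\Pndz \times \mathcal{F}$. This realises $\pi_{23}$ as a Zariski-locally trivial $\P^{N_d-3}$-bundle, and in particular as a smooth morphism with fibres linear subspaces of $\Pndz$ of codimension $3$.

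The main obstacle I anticipate is bookkeeping rather than depth: one must verify that the listed equations define $\ker\varphi$ scheme-theoretically, not merely set-theoretically, in every characteristic. This is precisely what the explicit Hasse-derivative calculation at $(p_0,\ell_0)$ accomplishes — the use of Hasse rather than ordinary second derivatives is exactly the device that makes the argument uniform in the characteristic of the residue field, including the case of characteristic $2$.
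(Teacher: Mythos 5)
Your proposal is correct and follows essentially the same route as the paper: observe that the defining equations are linear in the coefficients of $F$, use the $\glz$-action to reduce to the standard flag $p=[0,0,1]$, $\ell\colon x_0=0$, and check that the conditions collapse to $a_{0,0,d}=a_{0,1,d-1}=a_{0,2,d-2}=0$. The only difference is that you make the final step to smoothness explicit by exhibiting $\pi_{23}$ as the projectivization of the kernel of a surjection onto the bundle of $2$-jets along $\ell$ at $p$; the paper leaves this standard deduction implicit.
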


\begin{proof}
The equations defining~$\Gamma_d$ are linear in the coefficients of~$F$, showing that the scheme-theoretic fibers of~$\pi_{23}$ are linear subspaces of~$\Pndz$.  Thus, to prove the statement, it suffices to show that the codimension of each geometric fiber of~$\pi_{23}$ is~$3$.  Observe also that, by the standard covariance properties of the gradient and the Hessian, the scheme~$\Gamma_d$ is stable under the natural action of the group-scheme~$\glz$.

Let~$k$ be a field and let $(p,\ell)$ be a $k$-valued point of the flag variety~$\mathcal{F}$.  Choose coordinates on~$\bp$ so that~$p$ is the point $[0,0,1]$ and~$\ell$ is the line $\ell \colon x_0 = 0$.  Write $F = \sum a_{i_0i_1i_2} x_0^{i_0} x_1^{i_1} x_2^{i_2}$, with coefficients $a_{i_0i_1i_2}$ in~$k$.  In this coordinate system, the equations defining the fiber of~$\pi_{23}$ over $(p,\ell)$ reduce to
\[
a_{00d} = 0 , \qquad
a_{01d-1} = 0 , \qquad
a_{02d-2} = 0 , \qquad
\]
and the result follows.
\end{proof}

\begin{Lem} \label{l:gm}
The morphism $\pi_{12} \colon \Gamma_d \to P_d$ sending $(F,p,\ell)$ to $(F,p)$ is an isomorphism on the open set where~$p$ is a smooth point of the curve with equation $F=0$.
\end{Lem}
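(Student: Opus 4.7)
The plan is to exhibit an explicit inverse morphism on the smooth locus. Let $U \subset P_d$ denote the open subset parametrising pairs $(F,p)$ with $p$ smooth on the curve $\{F=0\}$ (equivalently, with $\nabla_p F \ne 0$), set $V = \pi_{12}^{-1}(U)$, and consider the candidate inverse
\[
s \colon U \longrightarrow \Gamma_d, \qquad (F,p) \longmapsto \bigl(F,\,p,\,[F_{x_0}(p) : F_{x_1}(p) : F_{x_2}(p)]\bigr),
\]
which is well defined as a morphism into $\Pndz \times \bpz \times \bpdz$ precisely because the gradient does not vanish on $U$. The remaining work is twofold: first, to verify that $s$ actually factors through $\Gamma_d$; second, to verify that $s$ and $\pi_{12}|_V$ are mutually inverse.

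Writing $\ell = [\nabla_p F]$, the first two defining conditions of $\Gamma_d$ at $s(F,p)$ should drop out formally. The incidence $p \in \ell$ is Euler's identity $\sum_i x_i F_{x_i} = dF$ combined with $F(p) = 0$; the first-order conditions $\nabla_p F \cdot v = 0$ for $v \in \ell^\perp$ are polynomial identities, as one sees from the cancellation $\nabla_p F \cdot (0,\ell_2,-\ell_1) = \ell_1 \ell_2 - \ell_2 \ell_1 = 0$ and its two permutations. None of this depends on the characteristic.

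The hard part, and the only non-routine step, is the Hessian condition, which is not manifest from the formula for $s$ and which I do not expect to hold identically on the ambient product. I would handle it via the following uniqueness observation: for any triple $(F,p,\ell')$ in $\Gamma_d$ with $\nabla_p F \ne 0$, the first-order condition on $\ell'$ forces $\nabla_p F$ to be orthogonal to the two-dimensional kernel of $\ell'$, so $\ell' = [\nabla_p F] = \ell$. Because each point of $U$ is by construction the image of some $(F,p,\ell') \in V$, this uniqueness identifies $\ell'$ with $\ell$, transferring the Hessian condition from $\ell'$ to $\ell$ and showing $s(U) \subset \Gamma_d$. The same uniqueness yields $s \circ \pi_{12}|_V = \mathrm{id}_V$, while $\pi_{12} \circ s = \mathrm{id}_U$ is immediate. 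The essential content of the proof is thus the linear algebraic fact that a nonzero gradient at a smooth point of the curve singles out a unique candidate inflection line, valid in every characteristic.
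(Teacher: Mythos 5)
Your proposal is correct and follows the same route as the paper: both exhibit the explicit inverse $(F,p) \mapsto (F,p,[F_{x_0}(p):F_{x_1}(p):F_{x_2}(p)])$, defined exactly where the gradient is nonzero. The paper simply asserts this is a rational inverse, whereas you additionally justify why the image lands in $\Gamma_d$ (transferring the Hessian condition via the uniqueness of the line determined by the first-order conditions); this is a worthwhile detail the paper leaves implicit, and your uniqueness argument for it is valid in every characteristic.
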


\begin{proof}
The assignment
\begin{eqnarray*}
P_d & \dashrightarrow & \Gamma_d \\
(F,p)
& \longmapsto &
\left(
F,p,[F_{x_0}(p),F_{x_1}(p),F_{x_2}(p)]
\right)
\end{eqnarray*}
is a rational inverse of~$\pi_{12}$ and it is defined whenever at least one of the partial derivative of~$F$ does not vanish at~$p$, that is, whenever~$p$ is a smooth point of the curve $F=0$, as required.
\end{proof}

The following result is the numerical analogue, in characteristic~$3$, of the tropical results of \cite{BL}*{Section~5, especially Theorem~5.6}.

\begin{Thm} \label{t:lu3}
Let~$k$ be a field of characteristic~$3$ and let $F \in \Pnd$ be a general form of degree~$d$.  For every $k$-valued point $(F,p)$ in~$P_d$, the {\pdl} of $(F,p)$ is~$3$.
\end{Thm}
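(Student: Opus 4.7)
The plan is to reduce the \pdl{} at each inflection point $(F,p)$ to a local computation on~$\Gamma_d$, in the spirit of the proof of Lemma~\ref{l:pl3}.

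First, I pass from $\pd$ to $\pi_1 := \pd \circ \pi_{12} \colon \Gamma_d \to \Pndz$.  For a general~$F$, the curve $C \colon F=0$ is smooth, so every $(F,p) \in P_d$ has $p$ smooth on~$C$, and the unique $\ell$ with $(F,p,\ell) \in \Gamma_d$ is the tangent line to~$C$ at~$p$.  By Lemma~\ref{l:gm}, $\pi_{12}$ is an isomorphism in a neighbourhood of $(F,p,\ell)$, so the \pdl{} at $(F,p)$ equals the $\pi_1$-length at $(F,p,\ell)$.  I will also use that, for a general~$F$, every inflection point is an ordinary flex, i.e.\ the intersection multiplicity of~$C$ with its tangent line equals~$3$ (not higher): this is an open condition on~$\Pnd$, nonempty in characteristic~$3$.

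Next, I carry out the local calculation at a fixed $(F,p,\ell)$.  In the coordinates of Lemma~\ref{l:pl3} --- $p = [0,0,1]$, $\ell = [1,0,0]$, and $F = x_0 G + x_1^3 H$ with $H(0,1)\neq 0$ --- I parametrise a neighbourhood of $(p,\ell)$ in the flag variety~$\mathcal{F}$ by three parameters $(u_0, u_1, v_1)$ via $p = [u_0, u_1, 1]$ and $\ell = [1, v_1, -u_0 - v_1 u_1]$.  The six equations cutting out $\pi_1^{-1}(F)$ in this chart are $F(p)=0$, the two gradient vanishing conditions along vectors in $\ell^\perp$, and the three Hessian vanishing conditions on $\ell^\perp$.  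The equation $F(p)=0$ reads $u_0 \, G|_p + u_1^3 \, H|_p = 0$, and since $G(0,0,1)$ is a unit (because $p$ is smooth on~$C$) it forces $u_0 \in (u_1^3)$.  Substituting into the gradient condition along $(v_1,-1,0) \in \ell^\perp$ then forces $v_1 \in (u_1^3)$.  Plugging both relations back into any one of the three Hessian equations yields a relation of the form $u_1^3 \cdot c = 0$, where $c$ is an explicit polynomial in the low-order Taylor coefficients of $G$ and $H$ at $(0,0,1)$.  For a general~$F$ the constant~$c$ is nonzero, hence $u_1^3 = 0$ in the local ring, and the quotient collapses to $k[u_1]/(u_1^3)$, of length~$3$.

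The main obstacle is to verify the generic nonvanishing of~$c$ --- that is, to check that the relation produced by the Hessian equations is not already swallowed by the ordinary-flex assumption $H(0,1)\neq 0$.  Because $c$ depends polynomially on finitely many Taylor coefficients of~$F$, the set of~$F$ with $c\neq 0$ at every inflection point is open in~$\Pnd$, so it suffices to exhibit a single smooth plane curve of degree~$d$ in characteristic~$3$ whose inflection points all satisfy $c\neq 0$; this witness then extends the conclusion to a general~$F$, yielding \pdl{} equal to~$3$ at every $(F,p)\in P_d$.
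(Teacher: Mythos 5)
Your reduction via Lemma~\ref{l:gm} and the local computation are sound, and the local analysis is close in spirit to the paper's: after eliminating $u_0$ and $v_1$ (both of which land in $(u_1^3)$ precisely because the characteristic is~$3$), the remaining Hessian equations all lie in $(u_1^3)$, so the length is at least~$3$, and it equals~$3$ exactly when one of them has unit coefficient on~$u_1^3$. (Minor point: the Hessian equation along the vector $(0,\ell_2,-\ell_1)$ lands in $(u_1^6)$, so ``any one of the three'' is too optimistic --- only two of them can produce the needed relation. The paper gets the same conclusion without recomputing the fiber with $\ell$ varying: it sandwiches the $\pi_1$-fiber between the $\pi_{13}$-fiber of Lemma~\ref{l:pl3}, which gives length $\geq 3$, and the semicontinuous condition ``length $\leq 3$''.)

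The genuine gap is the last step. You correctly identify that everything hinges on exhibiting a witness, but you never produce one, and the witness you ask for --- a \emph{smooth} plane curve of degree~$d$ over a field of characteristic~$3$ all of whose inflection points are ordinary flexes with $c \neq 0$ --- is both stronger than necessary and genuinely hard to write down for arbitrary~$d$. Note that even the paper's explicit cubic $E \colon x_0x_1x_2 + (x_0-x_1)^3 = 0$ of Example~\ref{ex:if3} is \emph{singular} (at $[0,0,1]$), and two of its three inflection flags sit at the singular point. The paper escapes this trap using the irreducibility of $\Gamma_d$, which follows from Lemma~\ref{l:girri}: since the good locus $U \subset \Gamma_d$ is open and $\Gamma_d$ is irreducible, a \emph{single} flag $(F_0,p_0,\ell_0) \in U$ suffices to make $\pi_1(\Gamma_d \setminus U)$ a proper closed subset of $\Pnd$, and the highly singular form $F_0 = x_0^{d-3}F_E$ together with the one good flag $([1,1,0],[0,0,1])$ provides it in every degree. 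Without either (i) an explicit smooth witness for every $d$, or (ii) this irreducibility argument reducing the check to one local flag on a possibly singular curve, your proof is incomplete; the same issue already affects your earlier unproved assertion that ``every inflection point is an ordinary flex'' is a nonempty open condition in characteristic~$3$ --- openness is fine, but nonemptiness is exactly the missing witness.
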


\begin{proof}
Let~$U$ be the locus of triples $(F,p,\ell) \in \Gamma_d$ satisfying the conditions
\begin{itemize}
\item
$p$ is a smooth point of the curve $C$ with equation $F=0$ and~$\ell$ is the tangent line to~$C$ at~$p$;
\item
the intersection multiplicity of the tangent line~$\ell$ with~$C$ at~$p$ is~$3$;
\item
the inflection flag scheme~$\mathcal{F}_{C}$ has dimension~$0$ and the point $(F,p,\ell)$ in the support of~$\mathcal{F}_{C}$ has length~$3$.
\end{itemize}
By Lemma~\ref{l:pl3} and standard upper-semicontinuity arguments, we see that~$U$ is open in~$\Gamma_d$.  Using again Lemma~\ref{l:pl3}, we obtain that for every point $u \in U$ the $\pi_{13}$-length of~$u$ is~$3$.  Applying Lemma~\ref{l:gm}, we deduce that for a triple $(F,p,C) \in U$, the $\pi_{13}$-length coincides with the {\pdl} of $(F,p)$. and they are therefore both equal to~$3$.

Let~$F_E$ be the form appearing in Example~\ref{ex:if3} and let $p_E \in E$ be the point $p_E = [1,1,0]$ and $\ell_E \subset \bp$ be the tangent line to~$E$ at~$p_E$.  It is easy to verify that the triple $(x_0^{d-3}F_E , p_E , \ell_E)$ is contained in~$U$, showing that~$U$ is non-empty.

Set $Z = \pi_1 (\Gamma_d \setminus U) \subset \Pnd$ and let $V \subset \Pnd$ be the complement of~$Z$.  By Lemma~\ref{l:girri}, the scheme~$\Gamma_d$ is irreducible and hence the open set~$V$ in~$\Pnd$ is dense.  It follows that every pair $(F,p)$, with~$F$ belonging to~$V$ has {\pdl} equal to~$3$, as required.
\end{proof}

Let~$d$ be a positive integer, let~$k$ be a field and let $C \subset \bp$ be the plane curve of degree~$d$ with equation $F=0$.  We denote by~$P_C$ the scheme-theoretic fiber $\pd^{-1}(F)$ of~$\pd$ over the $k$-valued point of~$\Pnd$ corresponding to~$C$.  We call~$P_C$ the {\emph{scheme of inflection points of~$C$}}.

\begin{Cor}
Let~$d$ be a positive integer, let~$k$ be an algebraically closed field of characteristic~$3$ and let $C \subset \bp$ be a general plane curve of degree~$d$.  The scheme of inflection points of~$C$ consists of $d(d-2)$ points, each of multiplicity~$3$.
\end{Cor}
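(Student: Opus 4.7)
The plan is to combine Theorem~\ref{t:lu3}, which assigns $\pd$-length~$3$ to every point of~$P_C$ for a general~$F$, with an independent computation of the total length of~$P_C$. Once we know that $P_C$ has total length $3d(d-2)$, dividing by the common multiplicity~$3$ of each geometric point yields the desired count of $d(d-2)$ distinct points in the support.

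To compute the total length, I would identify it with the generic degree of $\pd\colon P_d\to\Pndz$. Over~$\mathbb{C}$, this is the classical Pl\"ucker count: the inflection points of a general smooth plane curve of degree~$d$ are the $3d(d-2)$ transverse intersection points of~$C$ with its Hessian, a curve of degree $3(d-2)$. To transfer this count to a fiber over characteristic~$3$, I would invoke miracle flatness: by Lemma~\ref{l:girri}, $\Gamma_d$ is smooth over~$\Z$ (being a projective bundle over the smooth $\Z$-scheme $\mathcal{F}$) and hence Cohen--Macaulay, while $\Pndz$ is regular, so $\pi_1\colon\Gamma_d\to\Pndz$ is flat on its quasi-finite locus. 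Combined with the birationality of $\pi_{12}$ on smooth points (Lemma~\ref{l:gm}), this shows that $\pd$ has constant fiber length $3d(d-2)$ on an open subscheme of~$\Pndz$. To check that this open subscheme meets the characteristic~$3$ fiber, it suffices to exhibit a single smooth curve of degree~$d$ in characteristic~$3$ whose inflection scheme has total length $3d(d-2)$, which is obtained by a Bezout intersection of an appropriate smooth~$C$ with its Hasse-Hessian of degree $3(d-2)$.

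The principal obstacle is this last step: verifying that the total length $3d(d-2)$ is preserved under specialization to characteristic~$3$. Upper semicontinuity alone gives only one inequality, and Hessian-type constructions can degenerate in small characteristic; miracle flatness together with a single explicit witness of the expected fiber length resolves this. Once the equality ``total length of $P_C$ equals $3d(d-2)$'' is in hand, the corollary is an immediate consequence of Theorem~\ref{t:lu3}: $P_C$ is a finite scheme of total length $3d(d-2)$ in which every geometric point has length~$3$, so its support consists of exactly $d(d-2)$ distinct points.
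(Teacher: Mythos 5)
Your proof is correct and follows essentially the same route as the paper: combine Theorem~\ref{t:lu3} (every geometric point of~$P_C$ has $\pd$-length~$3$) with the fact that for general~$C$ the scheme~$P_C$ is finite of total degree $3d(d-2)$, then divide. The only difference is that the paper cites this degree count as a classical fact, whereas you justify it via miracle flatness for $\pi_1\colon\Gamma_d\to\Pndz$ together with a characteristic-$3$ witness; that extra argument is sound (and in the spirit of Lemma~\ref{l:girri}), but it is supplying a proof of a step the paper simply quotes.
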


\begin{proof}
The result follows from Theorem~\ref{t:lu3}, combined with the classical fact that, since $C$ is general, the scheme of inflection points $P_C$ of $C$ is finite of degree $3d(d-2)$.
\end{proof}

\section{Theta hyperplanes and theta characteristics}
\label{s:titc}

\subsection{Reminders on theta-characteristics}

Let~$k$ be a field and let~$C$ be a smooth, projective, geometrically irreducible curve of genus~$g$ defined over~$k$.  Denote by~$\omega_C$ the canonical divisor of~$C$.  A {\emph{theta-characteristic of~$C$}} is a line bundle~$\mathscr{L}$ on~$C$ such that there is an isomorphism $\mathscr{L}^2 \cong \omega_C$.

Assume that the characteristic of the field~$k$ is different from~$2$.  The curve~$C$ admits $2^{2g}$ distinct geometric theta-characteristics.  The {\emph{parity}} of the number of global sections of a theta-characteristic is deformation invariant.  This induces a partition on the set of theta-characteristics: a theta-characteristic~$\mathscr{L}$ is {\emph{even}} if the dimension $\dim \ac^{0} \left( C , \mathscr{L} \right)$ is even, it is {\emph{odd}} otherwise.  There are exactly $2^{g-1}(2^g+1)$ distinct even theta-characteristics and $2^{g-1}(2^g-1)$ distinct odd theta-characteristics.  If~$C$ is general, then every even theta-characteristic has no non-zero global sections and every odd theta-characteristic has a one-dimensional vector space of global sections.  We refer to the paper~\cite{Mu} of Mumford and to~\cite{ACGH}*{Appendix~B, p.\ 281}.

Assume now that the characteristic of the field~$k$ is~$2$.  There is an integer $h_C \geq 0$ such that the curve~$C$ admits $2^{g-h_C}$ distinct geometric theta-characteristics.  The curves for which~$h_C$ vanishes form a non-empty open subset of the moduli space of curves of genus~$g$: these curves are called {\emph{ordinary}} and will be our main focus of interest.  The parity of the number of global sections of a theta-characteristic is {\emph{no longer}} deformation invariant.  Rather than the parity, we will be concerned with whether or not a theta-characteristic admits a non-zero global section: we say that theta-characteristic is {\emph{effective}} if it admits a non-zero global section, we say that it is {\emph{non-effective}} otherwise.  The ordinary curves~$C$ admit a unique non-effective theta-characteristic: this theta-characteristic is called the {\emph{canonical theta-characteristic}}.  The notion of canonical theta-characteristic extends to all smooth projective curves.  An alternative definition of ordinary is a curve for which the canonical theta-characteristic is not effective.  On every curve all {\emph{non-canonical}} theta-characteristics are effective.  In particular, for a non-ordinary curve, all theta-characteristics are effective.  The theory of theta-characteristics in characteristic~$2$ is closely related to de Rham cohomology, the Cartier operator, Dieudonn\'e modules and so on.  We will not need any of this machinery.  We refer the interested reader to \cite{sv}*{Section~3} and to \cites{serre,bak:cartier,ah}.

\subsection{Local computations}

Let~$k$ be a field, let~$n$ be a positive integer and let $C \subset \mathbb{P}^n_k$ be a smooth, quasi-projective, curve.  For a $k$-valued point $p \in C$, we denote by $\mathcal{T}_{C,p}$ the embedded Zariski tangent space to~$C$ at~$p$.  Set
\[
C^\vee =
\left\{
(p,H) \mid p \in C {\textrm{ and }} H \supset \mathcal{T}_{C,p}
\right\}
\subset \mathbb{P}^n_k \times \bpnd ,
\]
so that~$C^\vee$ is a closed subset of $C \times \bpnd$.  Denote by $\gamma \colon C^\vee \to \bpnd$ the projection to the second factor.

In the next lemma, we compute the $\gamma$-length (Definition~\ref{d:lun}) of certain points of~$C^\vee$.

\begin{Lem} \label{l:lu2}
Let $(p,H) \in C^\vee$ be a $k$-valued point and assume that~$C$ contains no line through~$p$.  Assume that the intersection multiplicity of~$C$ and~$H$ at~$p$ is~$2$.  The $\gamma$-length~$\lambda_{(p,H)}$ of $(p,H)$ is
\begin{itemize}
\item
$\lambda_{(p,H)} = 1$, if the characteristic of~$k$ is different from~$2$;
\item
$\lambda_{(p,H)} = 2$, if the characteristic of~$k$ is~$2$.
\end{itemize}
\end{Lem}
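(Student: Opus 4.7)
The strategy closely parallels the proof of Lemma~\ref{l:pl3}, reducing the length computation in a fiber of~$\gamma$ to a one-variable calculation whose outcome is governed by a characteristic-sensitive factor of~$2$. First, I would choose homogeneous coordinates $x_0,\ldots,x_n$ on $\mathbb{P}^n_k$ so that $p = [0:\cdots:0:1]$, the hyperplane~$H$ has equation $x_0=0$, and $\mathcal{T}_{C,p}$ is the coordinate line $x_0 = x_2 = \cdots = x_{n-1} = 0$. Since~$C$ is smooth at~$p$ and contains no line through~$p$, I introduce a formal uniformizer~$t$ at~$p$ and parametrize~$C$ locally as
\[
t \longmapsto \bigl(x_0(t),\, x_1(t),\, \ldots,\, x_{n-1}(t),\, 1\bigr),
\]
with $x_1(t) = t + O(t^2)$ and $x_i(t) \in (t^2)$ for $i \notin \{1,n\}$. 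The hypothesis that the intersection multiplicity of~$C$ and~$H$ at~$p$ equals~$2$ forces $x_0(t) = t^2 a_0(t)$ for some unit $a_0 \in k[[t]]^{\times}$.

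Next, I would describe the scheme-theoretic fiber of~$\gamma$ over~$H$ at $(p,H)$. Holding~$H$ fixed and varying the candidate tangency point along the parametrization, the two conditions $p' \in H$ and $\mathcal{T}_{C,p'} \subset H$ cutting out~$C^\vee$ translate, respectively, into the vanishing of $H(p'(t)) = x_0(t)$ and the vanishing of its first $t$-derivative $x_0'(t)$. Consequently, the irreducible component of $\gamma^{-1}(H)$ supported at $(p,H)$ has local ring
\[
k[[t]]\,\big/\,\bigl(x_0(t),\, x_0'(t)\bigr).
\]

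The proof then concludes by inspecting this ideal. Compute $x_0'(t) = 2t\,a_0(t) + t^2 a_0'(t)$. If the characteristic of~$k$ is different from~$2$, the leading term is a unit multiple of~$t$, the ideal collapses to $(t)$, and the length is~$1$. If the characteristic of~$k$ is~$2$, the coefficient~$2$ annihilates the first summand and $x_0'(t) = t^2 a_0'(t) \in (t^2) = \bigl(x_0(t)\bigr)$, so the ideal equals $(t^2)$ and the length is~$2$. The step requiring the most care is the identification of the scheme-theoretic fiber in the middle paragraph: verifying that $x_0(t)$ and $x_0'(t)$ truly generate its defining ideal rather than merely cutting it out set-theoretically, so that the length of the displayed local ring really coincides with the $\gamma$-length of Definition~\ref{d:lun}. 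This should fall out of the two Taylor-coefficient conditions that~$H$ imposes on the parametrization, in the spirit of the explicit equations for~$\Pi$ written down in the proof of Lemma~\ref{l:pl3}.
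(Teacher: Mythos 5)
Your argument is correct and arrives at the right lengths by a genuinely different, parametric route. The paper works on the equation side: it chooses adapted coordinates, writes $C$ locally as a complete intersection $f_2=\cdots=f_n=0$ with $f_i=x_0^{\deg f_i-1}x_i+g_i$, gives $C^\vee$ the conormal scheme structure via the Jacobian--minor equations \eqref{e:cduale}, and shows that the only maximal minor that can be nonzero modulo $\mathfrak{m}_p^2$ is congruent to $2\lambda x_1$ with $\lambda\neq0$; your $x_0'(t)=2ta_0(t)+t^2a_0'(t)$ is the parametric avatar of that $2\lambda x_1$, and both proofs hinge on the same factor of~$2$ produced by differentiating a function vanishing to order exactly~$2$. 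The step you flag and defer --- that $x_0(t)$ and $x_0'(t)$ generate the defining ideal of the fiber rather than merely cutting out its support --- is precisely where the paper spends its effort, and it does require an argument: one must first commit to a scheme structure on $C^\vee$ (the $n\times n$ minors of $\Jac(\sum\ell_ix_i,f_2,\ldots,f_n)$, as in \eqref{e:cduale}) and then check that the ideal of minors, pulled back along the parametrization, equals $(x_0'(t))$. This does hold: in affine coordinates the Jacobian $J$ with $\ell$ specialized to $H$ satisfies $J\,v(t)=(x_0'(t),0,\ldots,0)^{T}$ for the velocity vector $v(t)$ of the parametrization, and since the adjugate entry pairing the first row of $J$ with the uniformizing coordinate is a unit at $p$, the determinant of $J$ is a unit multiple of $x_0'(t)$. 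With that supplement your proof is complete; the trade-off is that your route has a shorter endgame (a one-variable derivative instead of a determinant expansion modulo $\mathfrak{m}_p^2$), while the paper's route builds the scheme-theoretic identification in from the start.
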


\begin{proof}
Let $\mathscr{O}_{C,p}$ be the local ring of~$C$ at the point~$p$, with maximal ideal~$\mathfrak{m}_p$.  Choose a linear form~$x_0$ on~$\mathbb{P}^n_k$ such that the hyperplane with equation $x_0=0$ does not contain the point~$p$ and let $\mathbb{A}^n_k \subset \mathbb{P}^n_k$ be an affine coordinate chart defined by $x_0 \neq 0$, containing the point~$p$.

Let $V_p$ be the vector space of linear functions on~$\mathbb{A}^n_k$ vanishing on~$p$.  For each integer $r \geq 1$, let $V_p^r \subset V_p$ be the kernel of the $k$-linear map $V_p \to \mathscr{O}_{C,p}/\mathfrak{m}_p^r$.  Each successive quotient of the chain $V_p = V_p^1 \supset V_p^2 \supset \cdots \supset V_p^r \supset \cdots$ has dimension at most~$1$.  The elements in $V_p^1 \setminus V_p^2$ are linear functions vanishing on hyperplanes transverse to~$C$ at~$p$.  As~$C$ is smooth at~$p$, there are such hyperplanes: denote by~$x_1$ any linear form on~$\mathbb{P}^n_k$ such that the ratio $\frac{x_1}{x_0}$ is in $V_p^1 \setminus V_p^2$.  Let~$x_2$ be a non-zero linear form on~$\mathbb{P}^n_k$ vanishing on~$H$.  By the assumption that~$C$ and~$H$ have intersection multiplicity~$2$ at~$p$, we deduce that~$\frac{x_2}{x_0}$ is contained in $V_p^2 \setminus V_p^3$.  Finally, let $x_3, \ldots , x_n$ be linear forms on~$\mathbb{P}^n_k$ such that $\frac{x_3}{x_0}, \ldots , \frac{x_n}{x_0}$ is a basis of~$V_p^3$.  Observe that $x_0 , \ldots , x_n$ is a homogeneous coordinate system for~$\mathbb{P}^n_k$.  With respect to this basis,
\begin{itemize}
\item
the homogeneous coordinates of the point~$p$ are $[1 , 0 , \ldots , 0]$,
\item
the equation of the hyperplane~$H$ is $x_2 = 0$,
\item
the equations of the tangent line~$\mathcal{T}_{C,p}$ to~$C$ at~$p$ are $x_2 = \cdots = x_n = 0$.
\end{itemize}
Since the equations of the tangent line to~$C$ at~$p$ are $x_2 = \cdots = x_n = 0$, we can choose homogeneous polynomials $f_2,\ldots ,f_n \in k[x_0 , \ldots , x_n]$, vanishing on~$C$ and such that, for $i \in \{2 , \ldots , n\}$, there is a homogeneous polynomial $g_i(x_0,\ldots,x_n)$ in the square of the ideal generated by $(x_1,\ldots,x_n)$ satisfying the equality
\[
f_i (x_0 , \ldots , x_n) = x_0^{\deg f_i-1} x_i + g_i(x_0,\ldots,x_n).
\]
It follows that the scheme with equations $f_2 = \cdots = f_n = 0$ coincides with~$C$ in a neighbourhood of the point~$p$.

We now argue that the coefficient of the monomial~$x_0^{\deg f_2-2} x_1^2$ in~$g_2$ is non-zero.  We set $y_1 = \frac{x_1}{x_0} , \ldots , y_n = \frac{x_n}{x_0}$.  By construction, the restriction of all monomials in $y_1 , \ldots , y_n$ to~$C$ belongs to $\mathfrak{m}_p^3$, except for the monomials $1,y_1,y_2,y_1^2$.  Since the restriction of the polynomial $y_2+g_2(1,y_1 , \ldots , y_n)$ to~$C$ vanishes, we deduce that in the polynomial $g_2(1,y_1 , \ldots , y_n)$ the monomial $y_1^2$ appears with non-zero coefficient, as stated.

The equations
\begin{eqnarray}
\nonumber
\sum_{i=0}^n \ell_i x_i
& = &
0
\\
\label{e:cduale}
f_2 (x_0 , \ldots , x_n) = \cdots = f_n (x_0 , \ldots , x_n)
& = &
0 \\
\nonumber
n \times n \; \mino \left(
\Jac \left(
\sum \ell_i x_i , f_2 , \ldots , f_n \right)
\right)
& = &
0,
\end{eqnarray}
where $\Jac$ denotes the $n \times (n+1)$ Jacobian matrix, with respect to the variables $x_0 , \ldots , x_n$, define~$C^\vee$ in a neighbourhood of $(p,H)$ in $\mathbb{P}^n_k \times {\mathbb{P}^n_k}^\vee$.

Let $C^\vee_H$ be the fiber of~$\gamma$ over the hyperplane~$H$, and let $(C^\vee_H)_p$ be the irreducible component supported at $(p,H)$ of $C^\vee_H$.  We obtain the equations of $(C^\vee_H)_p$ in a neighbourhood of $(p,H)$, setting the $(n+1)$-tuple $(\ell_0 , \ldots , \ell_n)$ to $(0,0,1,0,\ldots , 0)$ in Equations~\eqref{e:cduale}.  Thus, scheme $(C^\vee_H)_p$ is contained in~$C$ and, using the equation $x_2=0$, we find that it is even contained in $\spec \mathscr{O}_{C,p}/\mathfrak{m}_p^2$.  We deduce that the scheme $(C^\vee_H)_p$ is reduced (and smooth) if and only if at least one of the $n \times n$ minors of $J=\Jac (x_2 , f_2 , \ldots , f_n)$ is not contained in~$\mathfrak{m}_p^2$.  The matrix~$J$ belongs to
\[
\begin{pmatrix}
0 & 0 & 1 & 0 & \cdots & 0 \\
\mathfrak{m}_p^2 & 2\lambda x_1 + \mathfrak{m}_p^2 & 1+\mathfrak{m}_p & \mathfrak{m}_p & \cdots & \mathfrak{m}_p \\
\mathfrak{m}_p^2 & \mathfrak{m}_p & \mathfrak{m}_p & 1+\mathfrak{m}_p & \ddots & \vdots \\
\vdots & \vdots & \vdots & \ddots & \ddots & \mathfrak{m}_p \\
\mathfrak{m}_p^2 & \mathfrak{m}_p & \mathfrak{m}_p & \cdots & \mathfrak{m}_p & 1+\mathfrak{m}_p
\end{pmatrix},
\]
where $\lambda$ is the coefficient of the monomial $x_0^{\deg f_2-2}x_1^2$ in the form $g_2$.  Since the entries of the first column of~$J$ are contained in~$\mathfrak{m}_p^2$, the only minor that may not lie in~$\mathfrak{m}_p^2$ is the one obtained removing the first column of~$J$: we denote this minor by~$M$.  Developing~$M$ with respect to the first row, we find that it is contained in
\[
\det
\begin{pmatrix}
2\lambda x_1 + \mathfrak{m}_p^2 & \mathfrak{m}_p & \cdots & \mathfrak{m}_p \\
\mathfrak{m}_p & 1+\mathfrak{m}_p & \ddots & \vdots \\
\vdots & \ddots & \ddots & \mathfrak{m}_p \\
\mathfrak{m}_p & \cdots & \mathfrak{m}_p & 1+\mathfrak{m}_p
\end{pmatrix}.
\]
Expanding the determinant using signed sums over permutations, we obtain the congruence $M \equiv 2\lambda x_1 \pmod{\mathfrak{m}_p^2}$.  We proved that~$\lambda$ is non-zero, and hence we find that $M \in \mathfrak{m}_p \setminus \mathfrak{m}_p^2$ if and only if the characteristic of~$k$ is different from~$2$, as needed.
\end{proof}

Let~$t$ be a positive integer and let $C_{2,t} \subset C^t \times \bpnd$ be the fiber product of~$C^\vee$ with itself~$t$ times over~$\bpnd$:
\[
C_{2,t} = {\underbrace{C^\vee \times_\gamma \cdots \times_\gamma C}_{t {\textrm{ times}}}}^\vee.
\]
We denote by $\gamma^\vee \colon C_{2,t} \to \bpnd$ the natural projection map.  Thus, the geometric points of $C_{2,t}$ consist of the $(t+1)$-tuples $(p_1,\ldots,p_t,H)$ such that the hyperplane~$H$ contains the tangent line to~$C$ at each one of the points $p_1 , \ldots , p_t$.

\begin{Prop}\label{prop:tp}
Let~$k$ be a field of characteristic~$2$ and let $n,t$ be positive integers.  Suppose that $C \subset \mathbb{P}^n_k$ is a smooth curve.  Let $\mathbf{p} = (p_1,\ldots,p_t,H)$ be a point in $C_{2,t}$.  Assume that the intersection multiplicity of~$H$ with each one of the points $p_1,\ldots,p_t$ is equal to~$2$.  The $\gamma^\vee$-length of $\mathbf{p}$ is~$2^t$.
\end{Prop}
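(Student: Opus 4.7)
The plan is to reduce the statement to $t$ independent applications of Lemma~\ref{l:lu2} by exploiting the fiber product structure of $C_{2,t}$. First I would observe that, by the very definition of $C_{2,t}$ as the $t$-fold fiber product of $C^\vee$ with itself over $\bpnd$, the scheme-theoretic fiber $(\gamma^\vee)^{-1}(H)$ is canonically isomorphic to the $t$-fold fiber product of $C^\vee_H := \gamma^{-1}(H)$ with itself over $\spec k(H)$. Since $\mathbf{p} = (p_1,\ldots,p_t,H)$ is a $k$-valued point, the irreducible component of $(\gamma^\vee)^{-1}(H)$ supported at $\mathbf{p}$ is identified with the product $(C^\vee_H)_{p_1} \times_{\spec k} \cdots \times_{\spec k} (C^\vee_H)_{p_t}$ of the irreducible components of $C^\vee_H$ supported at each $(p_i,H)$.

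Next I would use the multiplicativity of length for products of zero-dimensional schemes over a field, supported at rational points: writing $A_i$ for the coordinate ring of $(C^\vee_H)_{p_i}$, one has $\dim_k(A_1 \otimes_k \cdots \otimes_k A_t) = \prod_i \dim_k A_i$, and $\dim_k A_i$ coincides with the length since each $A_i$ is a local Artinian $k$-algebra with residue field $k$.

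Finally, Lemma~\ref{l:lu2} applied to each pair $(p_i,H)$ shows that $\dim_k A_i = 2$: the characteristic of $k$ is $2$ and each intersection multiplicity is $2$ by hypothesis, while the auxiliary hypothesis that $C$ contains no line through $p_i$ follows from the intersection multiplicity being exactly $2$ (a line through $p_i$ not contained in $H$ would contribute multiplicity $1$, and one contained in $H$ would contribute multiplicity greater than any finite integer). The $\gamma^\vee$-length of $\mathbf{p}$ is therefore $\prod_{i=1}^t 2 = 2^t$, as required.

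The substantive work is entirely in Lemma~\ref{l:lu2}; the only remaining obstacle is a careful handling of length versus $k$-dimension under the fiber product, which is why I would spell out the rational residue field assumption explicitly and invoke the formula $\dim_k(A \otimes_k B) = \dim_k A \cdot \dim_k B$ before translating back to lengths.
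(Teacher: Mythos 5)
Your proof is correct and follows essentially the same route as the paper's: identify the irreducible component of the fiber of $\gamma^\vee$ at $\mathbf{p}$ with the product $F_1 \times \cdots \times F_t$ of the components at each $(p_i,H)$, then multiply the lengths, each equal to $2$ by Lemma~\ref{l:lu2}. Your extra observation that the hypothesis of Lemma~\ref{l:lu2} about $C$ containing no line through $p_i$ is automatic when the intersection multiplicity is exactly $2$ is a worthwhile detail that the paper's proof leaves implicit.
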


\begin{proof}
Fix $i \in \{1,\ldots,t\}$.  Denote by $\gamma_i \colon C^t \times \bpnd \to C \times \bpnd$ the projection sending $(q_1,\ldots,q_t,H)$ to $(q_i,H)$ and by $F_i$ the irreducible component containing $(p_i,H)$ of the fiber of~$\gamma \colon C^\vee \to \bpnd$ over the point~$H$.  The support of the scheme~$F_i$ is the single point $(p_i,H)$ and the length of~$F_i$ is~$2$, by Lemma~\ref{l:lu2}.

Let~$F_{\mathbf{p}}$ denote the irreducible component containing~$\mathbf{p}$ of the fiber of~$\gamma^\vee$ over the point~$H$.  The support of the scheme~$F_{\mathbf{p}}$ is the single point~${\mathbf{p}}$.  The length of~$F_{\mathbf{p}}$ is the $\gamma^\vee$-length of~${\mathbf{p}}$.  The scheme~$F_{\mathbf{p}}$ satisfies
\[
F_{\mathbf{p}} = 
\gamma_1^{-1}(F_1) \cap \cdots \cap \gamma_t^{-1}(F_t) \cong
F_1 \times \cdots \times F_t.
\]
It follows that the $\gamma^\vee$-length of~${\mathbf{p}}$ is~$2^t$, as required.
\end{proof}

\subsection{Characteristic~2}

In this section, we work mostly with spaces defined over fields of characteristic~$2$.  Let $\mathcal{A} \to \spec{\mathbb{Z}}$ be a scheme (or stack) flat over $\spec{\mathbb{Z}}$.

\begin{Not} \label{n:pp}
We denote by $\due{\mathcal{A}}$ the fiber of $\mathcal{A} \to \spec{\mathbb{Z}}$ over the point $\spec{\mathbb{F}_2}$.
\end{Not}

Let $g \geq 0$ be a non-negative integer.  Let~$\Mg$ denote the stack of smooth projective connected curves of genus~$g$ flat over $\spec \Z$ and let 
$u \colon \Mgu \to \Mg$ be the universal curve over~$\Mg$.

Let $\Pic(\Mgu/\Mg) \to \Mg$ be the relative Picard stack.  We denote by
\begin{equation} \label{e:squaring}
\begin{array}{rcl}
\Pic(\Mgu/\Mg) & \longrightarrow & \Pic(\Mgu/\Mg) \\[5pt]
[(C,\mathscr{L})] & \longmapsto & [(C,\mathscr{L}^{\otimes 2})]
\end{array}
\end{equation}
the squaring morphism taking a line bundle~$\mathscr{L}$ on the curve~$C$ to the line bundle~$\mathscr{L}^{\otimes 2}$ on~$C$.  Let $\Sg \subset \Pic(\Mgu/\Mg)$ be the {\emph{stack of theta-characteristics}} defined as the fiber of the squaring morphism~\eqref{e:squaring} over the class of the relative canonical line bundle.  The stack~$\Sg$ has a natural morphism
\[
\pg \colon \Sg \longrightarrow \Mg.
\]
The geometric points of the fiber of~$\pg$ above a smooth projective curve~$C$ of genus~$g$ are the theta-characteristics of~$C$.

The stack~$\Sg$ is the union of two irreducible components, denoted by~$\Sg^+$ and~$\Sg^-$.  Each one of these components is finite and flat over~$\Mg$.  Over the open subset $\Mg \times_{\spec{\mathbb{Z}}} \spec{\mathbb{Z}[\frac{1}{2}]}$ of~$\Mg$, the stacks~$\Sg^+$ and~$\Sg^-$ are disjoint, \'etale, and their geometric points correspond to even or odd theta-characteristics.

For every prime~$p$ different from~$2$, the reductions $\Sg^\pm \times_{\spec{\mathbb{Z}}} \spec{\mathbb{F}_p}$ are irreducible (see~\cite{J}*{Section~3, Theorem~3.3.11}).  The stack~$\Sgdue$, reduction of~$\Sg$ modulo~$2$, is also reducible: the distinction between canonical and non-canonical theta-characteristics in characteristic~$2$ replaces the distinction between even and odd theta-characteristics in characteristic different from~$2$.

\begin{Def}
The stack $\Scan \subset \Sgdue$ of {\emph{canonical theta-characteristics}} is the closure of the (non-empty) open subset consisting of pairs $(C,\mathscr{L})$, where $\ac^0 \left( C , \mathscr{L} \right) = 0$.  The stack $\Snc \subset \Sgdue$ of {\emph{non-canonical theta-characteristics}} is the closure of the complement of~$\Scan$.
\end{Def}

\begin{Rem}
The subset~$\Scano$ consisting of pairs $(C,\mathscr{L})$, where $\ac^0 \left( C , \mathscr{L} \right) = 0$ is open in~$\Scan$ and hence also in~$\Sgdue$.  The image of $\Scano \to \Mgdue$ is the open subset of ordinary curves.  The natural closed immersion $\Snc \hookrightarrow \left( \Sgdue \setminus \Scano \right)$ is bijective on points.  Nevertheless, the morphism is an isomorphism only over the locus of ordinary curves.  Most of our arguments only involve ordinary curves, so that we do not need to worry too much about this distinction.
\end{Rem}

\begin{Thm}[Ekedahl] \label{t:eke}
The stacks~$\Scan$ and~$\Snc$ are irreducible.
\end{Thm}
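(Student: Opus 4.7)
Both $\Scan$ and $\Snc$ are finite over $\Mgdue$, which is irreducible because $\Mg$ is irreducible and flat over $\spec \Z$. To prove irreducibility, the plan is to exhibit in each case a dense open substack that is visibly irreducible, by working over the open substack of $\Mgdue$ parametrising ordinary curves. This locus is non-empty and hence dense. On any ordinary curve~$C$, the canonical theta-characteristic~$\mathscr{L}_{\text{can}}$ is unique, and tensoring with~$\mathscr{L}_{\text{can}}^{-1}$ yields a natural bijection between the non-canonical theta-characteristics of~$C$ and the non-zero elements of $\Jac(C)[2] \cong \mathbb{F}_2^g$, the latter being étale of rank~$g$ precisely because~$C$ is ordinary.

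For $\Scan$, the restriction of $\Scano$ to the ordinary locus is a section of~$\pg$, namely $C \mapsto (C,\mathscr{L}_{\text{can}})$; it is therefore isomorphic to the ordinary locus of $\Mgdue$ and thus irreducible, so its closure $\Scan$ inherits irreducibility. For $\Snc$, the restriction to the ordinary locus is a finite étale cover of degree $2^g - 1$ whose fibres are identified, via the bijection above, with $\mathbb{F}_2^g \setminus \{0\}$. Its irreducibility is equivalent to transitivity of the monodromy action of the étale fundamental group of the ordinary locus on this fibre. Since the action factors through $\mathrm{GL}_g(\mathbb{F}_2)$, which already acts transitively on $\mathbb{F}_2^g \setminus \{0\}$, it is enough to show that the image of monodromy is all of $\mathrm{GL}_g(\mathbb{F}_2)$; irreducibility of $\Snc$ then follows from taking closures.

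The crux is thus the computation of the monodromy on the étale part of $\Jac[2]$ over the ordinary locus of $\Mgdue$, and this will be the main obstacle. The approach I would take is to analyse the monodromy arising at the boundary of the Deligne--Mumford compactification: degeneration to an irreducible nodal curve with one non-separating node produces on the étale $2$-torsion of the generalised Jacobian an explicit transvection with respect to the corresponding vanishing cycle. Varying over enough boundary strata of this type provides a generating family of transvections of $\mathbb{F}_2^g$, and a standard group-theoretic argument shows that these transvections generate $\mathrm{GL}_g(\mathbb{F}_2)$. Alternatively, one may invoke the known monodromy result that the Galois group of the étale $p$-torsion of the universal Jacobian over the ordinary locus of $\Mg$ in characteristic~$p$ is the full general linear group $\mathrm{GL}_g(\mathbb{F}_p)$, which is precisely the statement needed in our case with $p=2$.
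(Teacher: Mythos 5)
Your proposal is correct and follows essentially the same route as the paper: both reduce the statement to Ekedahl's computation that the monodromy on the \'etale $2$-torsion of the Jacobian over the ordinary locus in characteristic~$2$ is the full group $\mathrm{GL}_g(\mathbb{F}_2)$, whose two orbits on $\mathbb{F}_2^g$ (after trivialising the torsor of theta-characteristics by the canonical one) correspond to the two components $\Scan$ and $\Snc$. Your sketched alternative of generating $\mathrm{GL}_g(\mathbb{F}_2)$ by boundary transvections is not carried out, but your stated fallback --- invoking the known monodromy theorem --- is precisely the citation of Ekedahl that the paper's proof rests on.
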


\begin{proof}
This is a consequence of a more general theorem of Ekedahl (see~\cite{eke}*{p.~43, Theorem~2.1}).  The cited theorem computes explicitly the monodromy of the reduction modulo~$2$ of the $2$-torsion subgroup of $\Pic(\Mgu/\Mg) \to \Mg$.  The monodromy action is the natural action of $\mathop{GL}_g(\mathbb{F}_2)$ on~$(\mathbb{F}_2)^g$, which has two orbits.  Since the stack~$\Sgdue$ is a torsor under the action of the $2$-torsion subgroup, we deduce that~$\Sgdue$ has two irreducible components and we are done.
\end{proof}

As a consequence of the previous theorem, we see that
\begin{itemize}
\item
the support of the stack~$\Sgpdue$ is the support of the whole~$\Sgdue$,
\item
the support of the stack~$\Sgmdue$ is the same as the support of the stack~$\Snc$.
\end{itemize}

The stacks $\Sgp$ and $\Sgm$ meet over $\Mgdue$, the reduction of~$\Mg$ modulo~$2$ (see Notation~\ref{n:pp}).  This confirms that, over a field of characteristic~$2$, the parity of a theta-characteristic is no more an invariant under a deformation.

\begin{Exa}\label{ex:iper}
Let $g \geq 1$ be an integer and let~$k$ be field of characteristic~$2$.  Let~$X$ be a smooth, projective curve of genus~$g$ defined over~$k$ and let $h \colon X \to \P^1_k$ be a separable double cover.  Thus, if $g \geq 2$, then the curve~$X$ is hyperelliptic.  We assume that the curve~$X$ is ordinary, so that the support of the ramification divisor of~$h$ consists of~$g+1$ distinct geometric points $d_1 , \ldots , d_{g+1}$.  Assume that each one of the $g+1$ points $d_1 , \ldots , d_{g+1}$ is defined over~$k$.
\begin{itemize}
\item
The divisor $D = d_1 + \cdots + d_{g-1}$ is smooth and the line bundle $\mathscr{O}_X(D)$ is a theta-characteristic on~$X$.  By the geometric Riemann-Roch, the linear system~$|D|$ consists of the single effective, reduced divisor~$D$.
\item
The divisor $E = d_1 + \cdots + d_g - d_{g+1}$ is a non-effective theta-characteristic on~$X$.  Hence,~$E$ is the canonical theta-characteristic on~$X$ (see~\cite{sv}*{p.~60}).
\end{itemize}
\end{Exa}

In order to prove our results, we use certain curves that we describe in the following lemma.

\begin{Lem} \label{l:generale}
Let~$g$ be a non-negative integer.  There exists a smooth projective connected curve~$C$ of genus~$g$ defined over an algebraically closed field of characteristic~$2$ with the following property.  For every theta-characteristic~$\theta$ on~$C$ either
\begin{enumerate}
\item \label{i:noncano}
$\theta$ is not the canonical theta-characteristic of~$C$, in which case the dimension of the vector space $\ac^0 (C,\theta)$ is one and the unique effective divisor in the linear system~$|\theta|$ is reduced (that is, consists of exactly $g-1$ distinct points of~$C$); or
\item
$\theta$ is the canonical theta-characteristic of~$C$, in which case the dimension of the vector space $\ac^0 (C,\theta)$ is zero.
\end{enumerate}
\end{Lem}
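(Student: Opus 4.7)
The plan is to realise the desired curves as geometric points of a non-empty open substack of $\Mgdue$. I split the conclusion into two conditions on $C$: (a) $C$ is ordinary, i.e., its canonical theta-characteristic satisfies $\dim \ac^0 = 0$, covering case~(2); and (b) every non-canonical theta-characteristic $\theta$ on $C$ has $\dim \ac^0(C,\theta)=1$ with reduced unique effective divisor, covering case~(1). Condition (a) defines a non-empty open substack of $\Mgdue$, namely the image of $\Scano \to \Mgdue$.

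To handle condition (b), I work inside $\Snc$. Let $Z \subset \Snc$ be the locus of pairs $(C,\theta)$ for which either $\dim \ac^0(C,\theta) \geq 2$ or the unique effective divisor in $|\theta|$ is not reduced. Upper semi-continuity of cohomology shows that $\{\dim \ac^0 \geq 2\}$ is closed in $\Snc$. On its open complement $U$, the pushforward of the universal theta-characteristic is invertible, producing a relative effective divisor of degree $g-1$ on the universal curve over $U$, whose discriminant locus is closed in $U$. Hence $Z$ is closed in $\Snc$.

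By Ekedahl's Theorem~\ref{t:eke}, the stack $\Snc$ is irreducible, so to prove $Z \subsetneq \Snc$ it suffices to exhibit one pair outside $Z$. For this I use Example~\ref{ex:iper}: take $X$ an ordinary hyperelliptic curve of genus $g$ over an algebraically closed field of characteristic~$2$ with ramification points $d_1, \ldots, d_{g+1}$, and set $\theta = \mathscr{O}_X(d_1 + \cdots + d_{g-1})$. Example~\ref{ex:iper} shows that $\theta$ is a theta-characteristic with $\dim \ac^0(X,\theta) = 1$ whose unique effective divisor $d_1 + \cdots + d_{g-1}$ is reduced; in particular $\theta$ is effective, hence is not the canonical theta-characteristic, placing $(X,\theta)$ in $\Snc \setminus Z$.

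Since the projection $\pgm \colon \Snc \to \Mgdue$ is finite, the image $\pgm(Z)$ is a proper closed substack of $\Mgdue$. Its complement is the open substack of curves whose non-canonical theta-characteristics all satisfy case~(1). Intersecting with the ordinary locus from (a) inside the irreducible $\Mgdue$ produces a non-empty open substack, and any geometric point of it defined over an algebraically closed field furnishes the required $C$. The main obstacle is establishing the closedness of $Z$, and specifically that ``non-reduced'' is a closed condition on the universal family of effective divisors of degree $g-1$; once this is granted, Ekedahl's irreducibility together with the explicit hyperelliptic example does the rest.
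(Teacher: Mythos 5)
Your proposal is correct and follows essentially the same route as the paper: Ekedahl's irreducibility of $\Scan$ and $\Snc$ (Theorem~\ref{t:eke}) reduces everything to exhibiting the single ordinary hyperelliptic example of Example~\ref{ex:iper}, and you merely spell out the openness/closedness and finiteness arguments that the paper leaves implicit. The only (trivial) omission is the case $g=0$, where Example~\ref{ex:iper} does not apply and one observes directly that $\mathscr{O}_{\mathbb{P}^1}(-1)$ is the unique, non-effective, theta-characteristic.
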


\begin{proof}
The case of genus $g=0$ is clear: the curve~$C$ is isomorphic to~$\mathbb{P}^1$, the only theta-characteristic on~$\mathbb{P}^1$ is the canonical theta-characteristic $\mathscr{O}_{\mathbb{P}^1}(-1)$ which is not effective.

Suppose that the inequality $g \geq 1$ holds.  By Theorem~\ref{t:eke}, the stacks $\Sgdue$ are~$\Scan$ and~$\Snc$ are irreducible.  Thus, to prove the lemma, it is sufficient to exhibit a single curve with non-effective canonical theta-characteristic and an effective theta-characteristic satisfying item~\eqref{i:noncano}.  We conclude using Example~\ref{ex:iper}.
\end{proof}

In the next results, we deal with  {\emph{canonical curves}}.

\begin{Def}
A {\emph{canonical curve}} over a field~$k$ is a smooth, projective, geometrically irreducible curve $C \subset \mathbb{P}^n_k$ such that
\begin{itemize}
\item
the restriction~$\mathscr{O}_{C}(1)$ of the line bundle $\mathscr{O}_{\mathbb{P}^n_k}(1)$ to~$C$ is the canonical line bundle on~$C$, and
\item
the restriction on global sections
\[
\ac^0 \left(\mathbb{P}^n_k , \mathscr{O}_{\mathbb{P}^n_k}(1) \right)
\longrightarrow
\ac^0 \left(C , \mathscr{O}_{C}(1) \right)
\]
is an isomorphism.
\end{itemize}
\end{Def}

Let $C \subset \mathbb{P}^n_k$ be a canonical curve.  It follows from the definition that the curve~$C$ is not hyperelliptic, the genus~$g$ of~$C$ satisfies inequality $g \geq 3$, the dimension of the ambient projective space is $n=g-1$ and the degree of~$C$ is $2g-2$.

For any fixed integer $g \geq 3$, the space of canonical curves of genus~$g$ is birational to a $\mathbb{P}GL_{g}$-bundle over the moduli space of curves of genus~$g$.  In particular, this space is irreducible.

Caporaso and Sernesi introduce the following terminology in~\cite{csodd}*{Section~2.1}.

\begin{Def} \label{d:tc}
Let $C \subset \mathbb{P}^{g-1}_k$ be a canonical curve of genus~$g$.  A {\emph{theta-hyperplane}} $H \subset \mathbb{P}^{g-1}_k$ is a hyperplane such that the intersection multiplicities of $C \cap H$ are all even.
\end{Def}

Let~$C$ be a general canonical curve of genus $g \geq 3$ over a field $k$.  The curve~$C$ has a finite number of theta-hyperplanes.
\begin{itemize}
\item
If~$k$ is a field of characteristic different from~$2$, then the number of theta-hyperplanes of~$C$ is $2^{g-1}(2^g-1)$.
\item
If~$k$ is a field of characteristic~$2$, then the number of theta-hyperplanes of~$C$ is~$2^g-1$.  Indeed, by Lemma~\ref{l:generale} the theta-hyperplanes of a general curve are in bijection with the non-canonical theta-characteristics of~$C$ and an ordinary curve has~$2^g-1$ non-canonical theta-characteristics.
\end{itemize}

We set $C_{2,g-1}^\circ \subset C_{2,g-1}$ to be the open subset consisting of the complement of all diagonals, that is, $C_{2,g-1}^\circ$ is the open subset of $C_{2,g-1}$ whose geometric points correspond to the $g$-tuples $(p_1,\ldots,p_{g-1},H) \in C_{2,g-1}$ such that the points $p_1 , \ldots , p_{g-1}$ are {\emph{distinct}} and the hyperplane~$H$ contains the tangent line to~$C$ at each one of these points.  Thus, the image under the morphism $\gamma^\vee \colon C_{2,g-1} \to \bpcand$ of the open subset $C_{2,g-1}^\circ$ is contained in the set of all theta-hyperplanes of~$C$.

For a general canonical curve~$C$, the image $\gamma^\vee (C_{2,g-1}^\circ)$ is the set of all $2^g-1$ theta-hyperplanes of~$C$.  This allows us to define the {\emph{scheme of theta-hyperplanes}} of the general curve~$C$ as the scheme-theoretic image~$\Theta(C)$ of~$C_{2,g-1}^\circ$ under the morphism~$\gamma^\vee$.

In the genus $g=3$ case, the tropical version of the following corollary was conjectured in \cite{bitr}*{Conjecture~1.1} and proved in \cite{cj}*{Theorem~1.1} and \cite{lm}*{Theorem~4.1}.  In the genus $g=4$ case, again in the tropical setting, see also \cite{hl}*{Theorem~5.2}.

\begin{Thm} \label{thm:rie}
Let $g \geq 3$ be an integer and let~$k$ be a field of characteristic~$2$.  Let $C \subset \mathbb{P}^{g-1}_k$ be a general canonical curve.  The scheme of theta-hyperplanes of~$C$ is finite of degree $2^{g-1}(2^g-1)$ and the multiplicity of each is~$2^{g-1}$.
\end{Thm}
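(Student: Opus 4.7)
The plan is to establish the theorem first on a specific canonical curve singled out by Lemma~\ref{l:generale}, then extend the conclusion to the general canonical curve via irreducibility of the space of canonical curves together with upper-semicontinuity, in close analogy with the proof of Theorem~\ref{t:lu3}. The main ingredients are Lemma~\ref{l:generale}, which produces a curve with especially simple theta-characteristic structure, and Proposition~\ref{prop:tp}, which converts the geometric tangency data into an explicit $\gamma^\vee$-length at each point of $C_{2,g-1}^\circ$.

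By Lemma~\ref{l:generale}, I would start with a smooth projective canonical curve~$C_0$ of genus~$g$ over an algebraically closed field of characteristic~$2$ such that each of the $2^g-1$ non-canonical theta-characteristics~$\theta$ of~$C_0$ is effective with a unique effective divisor $D_\theta \in |\theta|$, consisting of $g-1$ distinct points $p_1,\ldots,p_{g-1}$. For each such~$\theta$, the linear span of~$D_\theta$ in $\mathbb{P}^{g-1}_k$ is the associated theta-hyperplane $H_\theta$. By geometric Riemann--Roch the intersection cycle satisfies $H_\theta \cdot C_0 = 2 D_\theta$, so that $H_\theta$ meets $C_0$ at each $p_i$ with intersection multiplicity exactly~$2$. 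For every permutation $\sigma \in S_{g-1}$, the tuple $\mathbf{p}_\sigma = (p_{\sigma(1)},\ldots,p_{\sigma(g-1)}, H_\theta)$ lies in $C_{2,g-1}^\circ$, and Proposition~\ref{prop:tp} then gives $\gamma^\vee$-length exactly $2^{g-1}$ at each such $\mathbf{p}_\sigma$.

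Passing to the scheme-theoretic image $\Theta(C_0)$, the $(g-1)!$-fold permutation action of $S_{g-1}$ acts freely on the preimage of each theta-hyperplane and identifies the $(g-1)!$ fiber components coming from the distinct orderings of the tangency points; each theta-hyperplane therefore contributes multiplicity exactly $2^{g-1}$ to $\Theta(C_0)$, and summing over the $2^g-1$ theta-hyperplanes yields the total degree $2^{g-1}(2^g-1)$. To extend the conclusion from $C_0$ to the general canonical curve, I would consider the universal version of $C_{2,g-1}^\circ$ over the parameter space of canonical curves, define the open locus $U$ where the theta-hyperplane has $g-1$ distinct tangent points all of intersection multiplicity~$2$, use Proposition~\ref{prop:tp} and upper-semicontinuity to conclude that the $\gamma^\vee$-length is $2^{g-1}$ throughout $U$, and invoke Ekedahl's Theorem~\ref{t:eke} for the irreducibility of $\Snc$ (together with the irreducibility of the space of canonical curves, which is a $\mathbb{P}GL_g$-bundle over $\Mg$) to see that $U$ is dense. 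The hardest step is the careful comparison between the $\gamma^\vee$-length on $C_{2,g-1}^\circ$ and the multiplicity of $\Theta(C)$ in~$\bpcand$: one must verify that the induced morphism $C_{2,g-1}^\circ / S_{g-1} \to \Theta(C)$ is birational onto its image with matching multiplicities, so that the length-$2^{g-1}$ fiber components on the source translate into multiplicity~$2^{g-1}$ at the corresponding theta-hyperplane on the target.
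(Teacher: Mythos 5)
Your proposal is correct and follows essentially the same route as the paper: Lemma~\ref{l:generale} (resting on Ekedahl's irreducibility theorem) guarantees that a general canonical curve has exactly $2^g-1$ theta-hyperplanes, each tangent at $g-1$ distinct points with contact order~$2$, and Proposition~\ref{prop:tp} then gives the $\gamma^\vee$-length $2^{g-1}$, with the $(g-1)!$ orderings of the tangency points accounting for the passage to the image $\Theta(C)$. The paper's proof is shorter only because it delegates the reduction to a general curve to the discussion immediately preceding the theorem statement; the identification of the fiber length on $C_{2,g-1}^\circ$ with the multiplicity of $\Theta(C)$, which you flag as the hardest step, is likewise asserted without further elaboration in the paper.
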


\begin{proof}
Since~$C$ is general, we already observed that the set of geometric points of~$\Theta(C)$ is finite and coincides with the set of $2^g-1$ theta-hyperplanes of~$C$.  Moreover, the morphism~$\gamma^\vee$ is \'etale of degree $(g-1)!$, corresponding to the possible orderings of the $(g-1)$ intersection points between a theta-hyperplane and the curve~$C$.

We are left to show that the multiplicity of each geometric point of~$\Theta(C)$, or, equivalently, of~$C_{2,g-1}^\circ$, is~$2^{g-1}$.  This follows from Proposition~\ref{prop:tp}.
\end{proof}

In the following theorem, we denote by
\[
\pgp \colon \Sgpdue \to \Mgdue
\aand
\pgm \colon \Sgmdue \to \Mgdue
\]
the reduction modulo~$2$ of the projection maps $\Sgp \to \Mg$ and $\Sgm \to \Mg$.  This theorem is the numerical analogue, in characteristic~$2$, of the tropical result \cite{jl}*{Theorem~1.1}.

\begin{Thm}\label{thm:pgr}
Let $(C,\theta) \in \Sgdue$ be a point corresponding to a smooth, projective, ordinary curve~$C$ of genus~$g$ over a field~$k$ of characteristic~$2$, with a theta-characteristic~$\theta$.  If $(C,\theta)$ is in~$\Sgpdue$, then the \pgpl~$\lambda^+_{(C,\theta)}$ at the point $(C,\theta)$ is
\[
\lambda^+_{(C,\theta)} = \left\{
\begin{array}{l@{\quad {\textrm{if }}}l}
2^g, & ~\theta {\textrm{ is the canonical theta-characteristic of }} C, \\[3pt]
2^{g-1}, & ~\theta {\textrm{ is not the canonical theta-characteristic of }} C.
\end{array}
\right.
\]
If $(C,\theta)$ is in~$\Sgmdue$, then the {\pgml}~$\lambda^-_{(C,\theta)}$ at the point $(C,\theta)$ is
\[
\lambda^-_{(C,\theta)} = 2^{g-1}.
\]
\end{Thm}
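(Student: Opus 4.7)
The plan is to combine the flatness of $\pg^+$ and $\pg^-$ with Ekedahl's Theorem~\ref{t:eke} and the theta-hyperplane computation of Theorem~\ref{thm:rie}. First, both $\Sgp$ and $\Sgm$ are finite flat over~$\Mg$, of respective degrees $n_+ = 2^{g-1}(2^g+1)$ and $n_- = 2^{g-1}(2^g-1)$, being the two irreducible components of the finite flat $2$-torsion subscheme of the relative Picard $\Pic(\Mgu/\Mg)$. Consequently every scheme-theoretic fiber of $\pg^{\pm}$ has total length~$n_{\pm}$. Over the open substack $U \subset \Mgdue$ of ordinary curves, the reduced part of the $2$-torsion of the relative Jacobian is \'etale of degree~$2^g$, so the reduced closed substack underlying $\Sgdue|_U$ is finite \'etale of degree~$2^g$ over~$U$, splitting as $\Scan|_U \sqcup \Snc|_U$ of degrees $1$ and $2^g-1$ respectively.

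Since $\Sgpdue|_U$ and $\Sgmdue|_U$ are finite flat over the smooth base~$U$, they are Cohen--Macaulay. Miracle flatness therefore makes the finite morphisms $\Sgmdue|_U \to \Snc|_U$ and $\Sgpdue|_U \to \Scan|_U \sqcup \Snc|_U$ flat, and Ekedahl's irreducibility of $\Snc$ and $\Scan$ forces their ranks to be constant on each irreducible component. The rank of $\Sgmdue|_U \to \Snc|_U$ must then equal $n_-/(2^g-1) = 2^{g-1}$, establishing $\lambda^-_{(C,\theta)} = 2^{g-1}$ for every ordinary~$C$ and non-canonical~$\theta$.

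For $\pgp$, let $r$ and $R$ denote the constant ranks over $\Snc|_U$ and $\Scan|_U$ respectively, so that $(2^g - 1)\,r + R = n_+$. I compute $r$ by means of Theorem~\ref{thm:rie}: for the general curve $C$ provided by Lemma~\ref{l:generale}, the scheme of theta-hyperplanes $\Theta(C)$ consists of $2^g-1$ points, each of multiplicity~$2^{g-1}$. Upgrading the point-wise correspondence $\theta \mapsto H_\theta$ to a scheme-theoretic identification of the non-canonical part of $\Sgpdue|_{C}$ with $\Theta(C)$ forces $r = 2^{g-1}$, and substituting yields $R = n_+ - (2^g-1)\cdot 2^{g-1} = 2^{g-1}(2^g+1) - 2^{g-1}(2^g-1) = 2^g$, completing both cases.

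The principal obstacle is the scheme-theoretic identification used in the preceding paragraph: one must promote the point-wise assignment $\theta \mapsto H_\theta$ to a morphism of schemes over the moduli stack and verify that it is \'etale (or an isomorphism) at every non-canonical $(C,\theta)$ for $C$ general. Once this identification is in place, the remainder of the argument reduces to flatness, Ekedahl's irreducibility, and elementary counting.
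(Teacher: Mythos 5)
Your treatment of $\lambda^-$ is essentially the paper's: use that $\pg^-$ is finite flat of degree $2^{g-1}(2^g-1)$, that over the ordinary locus its fibre is supported on exactly $2^g-1$ distinct geometric points, and Ekedahl's irreducibility of $\Snc$ (Theorem~\ref{t:eke}) to force the local length to be the constant $2^{g-1}$. One caveat: there is no morphism $\Sgmdue|_U \to \Snc|_U$ to which ``miracle flatness'' can be applied (a non-reduced scheme does not retract onto its reduction); the correct phrasing is upper semicontinuity of the local length, constancy on a dense open by irreducibility, and the fixed total degree of each fibre -- which is what the paper does.

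The genuine gap is in the $\pgp$ case. Your single relation $(2^g-1)\,r + R = 2^{g-1}(2^g+1)$ is underdetermined, and the second relation you invoke -- a scheme-theoretic identification of the non-canonical part of the fibre of $\pgp$ over $C$ with the scheme of theta-hyperplanes $\Theta(C)$ of Theorem~\ref{thm:rie} -- is precisely the hard step, which you flag but do not supply. Nothing in the paper provides such an identification: Theorems~\ref{thm:rie} and~\ref{thm:pgr} are proved as independent, parallel results exactly because comparing the infinitesimal structure of the Picard-side fibre with that of the fibre in the dual projective space is not straightforward. The paper closes the system with a different and much simpler idea that your proposal misses: the fibre $\mathcal{S}_C$ of $\pg$ over $C$ is a torsor under the finite group scheme $\Jac_C[2]$, which has length $2^{2g}$ and, for $C$ ordinary, $2^g$ distinct geometric points; by homogeneity every geometric point of $\mathcal{S}_C$ therefore has multiplicity exactly $2^g$. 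This yields the pointwise identity $\lambda^+_{(C,\theta)} + \lambda^-_{(C,\theta)} = 2^g$, which, combined with the non-effectivity of the canonical theta-characteristic of an ordinary curve (so that $\lambda^-$ vanishes there), reduces the entire theorem to the $\lambda^-$ computation you already have. Replace the appeal to $\Theta(C)$ by this torsor argument and your proof goes through.
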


\begin{proof}
Let $\mathcal{S}_C$ be the fiber of $\pg \colon \Sgdue \to \Mgdue$ over the $k$-valued point corresponding to the curve~$C$.  Thus,~$\mathcal{S}_C$ is a scheme whose geometric points correspond to theta-characteristics of~$C$.  Observe that~$\mathcal{S}_C$ is a torsor under the $2$-torsion subgroup scheme $\Jac_{C}[2]$ of $\Jac_{C}$.  The group-scheme $\Jac_{C}[2]$ has length~$2^{2g}$ and, since~$C$ is ordinary, it has~$2^g$ distinct geometric points.  By homogeneity, each geometric point of $\Jac_{C}[2]$ has multiplicity~$2^g$.  Therefore, the $\Jac_{C}[2]$-torsor $\mathcal{S}_C$ also consists of $2^g$ geometric points, each of multiplicity~$2^g$.

From the description of~$\mathcal{S}_C$, we deduce the equality
\begin{equation} \label{e:sotc}
\lambda^+_{(C,\theta)} + \lambda^-_{(C,\theta)} = 2^g,
\end{equation}
with the convention that, for $\varepsilon \in \{+,-\}$, if $(C,\theta) \in \Sgdue$ is not in~$\due{\mathcal{S}}_g^\varepsilon$, then $\lambda^\varepsilon_{(C,\theta)} = 0$.  (A posteriori, the only new notation that this convention introduces is $\lambda^-_{(C,\theta)}$, where~$\theta$ is the canonical theta-characteristic of~$C$.)

We deduce that, to prove the result, it suffices to prove the identities
\begin{equation}\label{e:lambdameno}
\lambda^-_{(C,\theta)} = \left\{
\begin{array}{l@{\quad {\textrm{if }}}l}
0, & ~\theta {\textrm{ is the canonical theta-characteristic of }} C, \\[3pt]
2^{g-1}, & ~\theta {\textrm{ is not the canonical theta-characteristic of }} C.
\end{array}
\right.
\end{equation}

By~\cite{sv}*{p.~60}, if~$\theta$ is the canonical theta-characteristic of~$C$, then~$\theta$ is not effective.  It follows that the pair $(C,\theta)$ does not belong to the stack $\Sgmdue$.  Thus, $\lambda^-_{(C,\theta)}$ vanishes.  This proves the result in the case in which~$\theta$ is the canonical theta-characteristic.

Suppose now that~$\theta$ is not the canonical theta-characteristic.  Recall that the supports of~$\Sgmdue$ and~$\Snc$ coincide.  By the irreducibility of $\Snc$, Theorem~\ref{t:eke}, the function $\lambda^-_{(C,\theta)}$ is independent of the pair $(C,\theta)$, over a non-empty open subset of $\Mgdue$.  Denote by~$a$ the value of $\lambda^-_{(C,\theta)}$ on this open set.

We now show that~$a$ equals~$2^{g-1}$.  If~$C$ is an ordinary curve, then~$C$ has $2^g-1$ non-canonical theta-characteristics.  It follows that, for a general curve, the equality
\[
a (2^g-1) = \deg \pi_g^- = \deg \pi_g|_{\Sgm}
\]
holds. Since a smooth curve over a field of characteristic~$0$ has $2^{g-1} (2^g-1)$ odd theta-characteristics, we deduce the equality
\[
\deg \pi_g|_{\Sgm} = 2^{g-1} (2^g-1).
\]
We obtain that~$a$ equals $2^{g-1}$, as required.

So far, we proved the theorem for a general curve.  We now argue that the result holds for all ordinary curves.  Suppose, now, that~$C$ is an ordinary curve.  As before, we reduce to showing the identities in~\eqref{e:lambdameno}.  Let $\mathcal{S}_C^- \subset (\pg^-)^{-1}(C)$ be the fiber of~$\pg^-$ on the curve~$C$.  By the flatness of~$\pg^-$, the formula
\[
2^{g-1} (2^g-1) = \sum_{(C,\theta) \in \mathcal{S}_C^-} \lambda^-_{(C,\theta)}
\]
holds.  By upper-semicontinuity of lengths of fibers, each term $\lambda^-_{(C,\theta)}$ in the sum above satisfies the inequality $\lambda^-_{(C,\theta)} \geq 2^{g-1}$.  To conclude, it is sufficient to show that~$\mathcal{S}_C^-$ consists of $2^g-1$ distinct geometric points.

The fibers of~$\pg$ over the ordinary locus consist of~$2^g$ distinct geometric points, of which $2^g-1$ correspond to pairs $(D,\eta)$ with~$\eta$ a non-canonical theta-characteristic on~$D$.  Still on the ordinary locus, the support of each fiber of the morphism~$\pg^-$ is contained in the $2^g-1$ distinct geometric points of the fibers of~$\pg$ corresponding to non-canonical theta-characteristics.  We deduce that the number of distinct geometric points in the fibers of~$\pg^-$ cannot decrease on the ordinary locus, since the morphism~$\pg^-$ is proper.  We conclude that~$\mathcal{S}_C^-$ contains $2^g-1$ distinct geometric points.  The result follows.
\end{proof}

\section{Final remarks}

The method developed in this paper is potentially applicable to further situations.  We give a few examples of problems where the same strategy could go through.  We leave it as a challenge to check whether or not it works out.

\subsection{Steiner's conic problem}
This is the question of how many smooth conics are tangent to five general conics in the plane.  Several people worked on this problem (Steiner, de Jonqui\`eres, Chasles, Fulton, MacPherson).  The number over fields of characteristic different from~$2$ is~$3264$ (see~\cite{eiha} for more information on the problem).  In~\cite{vcon}, Vainsencher worked out the number over fields of characteristic~$2$ and obtained~$51$ as the answer (see~\cite{vcon}*{Section~9}).  We observe that $3264=2 \cdot 2^5 \cdot 51$.  We can imagine using Proposition~\ref{prop:tp} to show that the smooth conics tangent to five general conics over a field of characteristic~$2$ give rise to a finite scheme whose points have length is~$2^5$.  Combining this with Vainsencher's count, gives $2^5 \cdot 51$ conics counted with multiplicity.  This is half of what we expect from the results in characteristic different from~$2$.  The missing $2^5 \cdot 51$ solutions, could arise from counting singular conics appropriately.

\subsection{The formula of de Jonqui\`eres} \label{su:dej}
Choose non-negative integers $g,v$ and a $t$-uple of positive integers $\underline{m} = (m_1 , \ldots , m_t)$.  The de Jonqui\`eres number (\cite{dej}*{Theor\`eme~I,~3a}) is
\[
J(g,v , \underline{m}) = \left( \prod m_i \right) \sum _{h=0}^t \binom{t+v-h}{v} \, \binom{g}{h} \, (t-h)! \, h! \, \sigma_h (m_1-1 , \ldots , m_t-1),
\]
where $\sigma_0,\ldots , \sigma_t$ are the elementary symmetric functions in~$t$ variables.  Fix a smooth projective curve $C \subset \mathbb{P}^n$ of genus~$g$ and degree $d = v+ \sum m_i$.  Under appropriate conditions, the integer $J(g,v , \underline{m})$ is the expected number of hyperplanes that have contact multiplicities at least $m_1 , \ldots , m_t$ at~$t$ ordered points of~$C$.  A special Jonqui\`eres numbers yield the Pl\"ucker formulas (\cite{plu}) for the number of inflection and bitangent lines of general plane curves.  See~\cite{vdj}*{Section~4.4, p.~407} for a modern treatment of the de Jonqui\`eres formula.  In the special case of bitangents to plane quartics, we find
\[
J(3,0,(2,2)) = 56.
\]
This is twice the number of bitangent lines, since we are ordering the points of tangency between the bitangent and the curve. 
Writing
\[
(t-h)! \, h! \, \sigma_h (m_1-1 , \ldots , m_t-1) = \sum _{\tau \in \mathfrak{S}_t} (m_{\tau(1)}-1) \cdots (m_{\tau(t)}-1),
\]
it is clear that $\#{\textrm{Stab}}_{\mathfrak{S}_t}(\underline{m})$ divides the integer $(t-h)! \, h! \, \sigma_h (m_1-1 , \ldots , m_t-1)$.  This shows that the number
\[
\frac{J(g,v , \underline{m})}{\#{\textrm{Stab}}_{\mathfrak{S}_t}(\underline{m})}
\]
is an integer divisible by $\prod m_i$.  This number counts hyperplanes as above, without an ordering of the points.  In the case in which the integers $m_1,\ldots,m_t$ are all equal to~$2$, this is perfectly in line with the formula in Proposition~\ref{prop:tp}.  Moreover, Lemma~\ref{l:pl3} gives similar divisibility properties for the de Jonqui\`eres number, in the case of inflection lines of plane curves.  This suggests a possible approach to proving the divisibility of the de Jonqui\`eres number by the product $\prod m_i$ following the methods used in this paper.

\subsection{A log Gromov-Witten invariant with maximal contact multiplicities} \label{su:gw}

Let $L \subset \mathbb{P}^2$ be a line, let $Q \subset \mathbb{P}^2$ be a conic not tangent to~$L$ and let $x \in \mathbb{P}^2$ be a general point.  For a fixed positive integer~$d$, denote by~$N_d$ the number of nodal rational curves of degree~$d$ containing~$x$ and having a single point of intersection, of multiplicity~$d$, with~$L$ and a single point of intersection, of multiplicity~$2d$, with~$Q$.  The number~$N_d$ is finite and it has been computed by~\cite{bbg}:
\[
N_d = \binom{2d}{d}.
\]
This result is part of a more general project relating log Gromov-Witten invariants with local Gromov-Witten invariants (see the log-local principle, \cite{grg}*{Conjecture~1.4}).  This computation also follows from the general results in~\cite{nr}.

Suppose that the integer~$d$ is a prime number~$p$.  Analogously to our Lemmas~\ref{l:pl3} and~\ref{l:lu2}, we would expect that each nodal rational curve with the required maximal contact order with $L \cup Q$ would give a contribution divisible by~$p^2$ to the count: one factor of~$p$ for each contact point.  Moreover, the two tangent lines $L_1,L_2$ to~$Q$ through~$x$ give rise to two degenerate ``solutions''  $pL_1$ and $pL_2$.  Thus, we would expect the number~$N_p$ to satisfy the congruence
\[
N_p = \binom{2p}{p} \equiv 2 \pmod{p^2}.
\]
Indeed, this congruence holds for all primes~$p$.  The congruence actually holds modulo~$p^3$, as long as~$p$ is different from~$2$ or~$3$.  The further divisibility could be explained by the higher order of contact~$2p$ with the curve~$Q$.

\subsection{Several primes}
In some problems, the natural {\emph{contact}} is composite, instead of prime.  For instance, this happens when considering the $m$-torsion subgroup of an abelian variety.  In this specific case, the information coming separately from each prime power-factor of~$m$ could be combined (e.g.\ via the Chinese Remainder Theorem).  In other cases, this is not so clear.  For instance, the formula of de Jonqui\`eres and the Gromov-Witten example (Subsections~\ref{su:dej} and~\ref{su:gw}) naturally have contact multiplicities that are not necessarily prime numbers.  In these cases, it is not clear whether the information at individual primes is sufficient to deduce the answer for every integer.  This seems a challenging and interesting question.

\begin{bibdiv}
\begin{biblist}

\bib{ah}{article}{
   author={Achter, Jeffrey D.},
   author={Howe, Everett W.},
   title={Hasse-Witt and Cartier-Manin matrices: a warning and a request},
   conference={
      title={Arithmetic geometry: computation and applications},
   },
   book={
      series={Contemp. Math.},
      volume={722},
      publisher={Amer. Math. Soc., Providence, RI},
   },
   date={2019},
   pages={1--18}
}

\bib{ACGH}{book}{
   author={Arbarello, Enrico},
   author={Cornalba, Maurizio},
   author={Griffiths, Phillip A.},
   author={Harris, Joseph},
   title={Geometry of algebraic curves. Vol. I},
   series={Grundlehren der Mathematischen Wissenschaften},
   volume={267},
   publisher={Springer-Verlag, New York},
   date={1985},
   pages={xvi+386}
}

\bib{bak:cartier}{article}{
   author={Baker, Matthew},
   title={Cartier points on curves},
   journal={Internat. Math. Res. Notices},
   date={2000},
   number={7},
   pages={353--370}
}

\bib{bitr}{article}{
   author={Baker, Matthew},
   author={Len, Yoav},
   author={Morrison, Ralph},
   author={Pflueger, Nathan},
   author={Ren, Qingchun},
   title={Bitangents of tropical plane quartic curves},
   journal={Math. Z.},
   volume={282},
   date={2016},
   number={3-4},
   pages={1017--1031}
}

\bib{magma}{article}{
    AUTHOR = {Bosma, Wieb},
    AUTHOR = {Cannon, John},
    AUTHOR = {Playoust, Catherine},
     TITLE = {The {M}agma algebra system. {I}. {T}he user language},
      NOTE = {Computational algebra and number theory (London, 1993)},
   JOURNAL = {J. Symbolic Comput.},
    VOLUME = {24},
      YEAR = {1997},
    NUMBER = {3-4},
     PAGES = {235--265}
}

\bib{bbg}{article}{
   author={Bousseau, Pierrick},
   author={Brini, Andrea},
   author={van Garrel, Michel},
   title={Open-closed duality and the log-local principle},
   pages={in preparation}
}

\bib{BL}{article}{
   author={Brugall\'{e}, Erwan A.},
   author={L\'{o}pez de Medrano, Lucia M.},
   title={Inflection points of real and tropical plane curves},
   journal={J. Singul.},
   volume={4},
   date={2012},
   pages={74--103}
}

\bib{csodd}{article}{
   author={Caporaso, Lucia},
   author={Sernesi, Edoardo},
   title={Characterizing curves by their odd theta-characteristics},
   journal={J. Reine Angew. Math.},
   volume={562},
   date={2003},
   pages={101--135}
}

\bib{cj}{article}{
   author={Chan, Melody},
   author={Jiradilok, Pakawut},
   title={Theta characteristics of tropical $K_4$-curves},
   conference={
      title={Combinatorial algebraic geometry},
   },
   book={
      series={Fields Inst. Commun.},
      volume={80},
      publisher={Fields Inst. Res. Math. Sci., Toronto, ON},
   },
   date={2017},
   pages={65--86}
}

\bib{eiha}{book}{
   author={Eisenbud, David},
   author={Harris, Joe},
   title={3264 and all that---a second course in algebraic geometry},
   publisher={Cambridge University Press, Cambridge},
   date={2016},
   pages={xiv+616}
}

\bib{eke}{article}{
   author={Ekedahl, Torsten},
   title={The action of monodromy on torsion points of Jacobians},
   conference={
      title={Arithmetic algebraic geometry},
      address={Texel},
      date={1989},
   },
   book={
      series={Progr. Math.},
      volume={89},
      publisher={Birkh\"{a}user Boston, Boston, MA},
   },
   date={1991},
   pages={41--49}
}

\bib{grg}{article}{
   author={van Garrel, Michel},
   author={Graber, Tom},
   author={Ruddat, Helge},
   title={Local Gromov-Witten invariants are log invariants},
   journal={Adv. Math.},
   volume={350},
   date={2019},
   pages={860--876}
}

\bib{hl}{article}{
   author={Harris, Corey},
   author={Len, Yoav},
   title={Tritangent planes to space sextics: the algebraic and tropical
   stories},
   conference={
      title={Combinatorial algebraic geometry},
   },
   book={
      series={Fields Inst. Commun.},
      volume={80},
      publisher={Fields Inst. Res. Math. Sci., Toronto, ON},
   },
   date={2017},
   pages={47--63}
}

\bib{har}{article}{
   author={Harris, Joe},
   title={Galois groups of enumerative problems},
   journal={Duke Math. J.},
   volume={46},
   date={1979},
   number={4},
   pages={685--724}
}

\bib{J}{article}{
   author={Jarvis, Tyler J.},
   title={Geometry of the moduli of higher spin curves},
   journal={Internat. J. Math.},
   volume={11},
   date={2000},
   number={5},
   pages={637--663}
}

\bib{jl}{article}{
   author={Jensen, David},
   author={Len, Yoav},
   title={Tropicalization of theta characteristics, double covers, and Prym
   varieties},
   journal={Selecta Math. (N.S.)},
   volume={24},
   date={2018},
   number={2},
   pages={1391--1410}
}

\bib{dej}{article}{
   author={Jonqui\`eres, Ernest de},
   title={M\'{e}moire sur les contacts multiples d'ordre quelconque des courbes
   de degr\'{e} r, qui satisfont \`a des conditions donn\'{e}es, avec une courbe fixe
   du degr\'{e} m; suivi de quelques r\'{e}flexions sur la solution d'un grand
   nombre des questions concernant les propri\'{e}t\'{e}s projectives des courbes et
   des surfaces alg\'{e}briques},
   language={French},
   journal={J. Reine Angew. Math.},
   volume={66},
   date={1866},
   pages={289--321}
}

\bib{llb}{article}{
   author={Lee, Heejong},
   author={Len, Yoav},
   title={Bitangents of non-smooth tropical quartics},
   journal={Port. Math.},
   volume={75},
   date={2018},
   number={1},
   pages={67--78}
}

\bib{lm}{article}{
Author = {Len, Yoav},
Author = {Markwig, Hannah},
Title = {Lifting tropical bitangents},
journal={J. Symb. Comput.},
volume={96},
Year = {2020},
pages={122--152}
}

\bib{Mu}{article}{
   author={Mumford, David},
   title={Theta characteristics of an algebraic curve},
   journal={Ann. Sci. \'{E}cole Norm. Sup. (4)},
   volume={4},
   date={1971},
   pages={181--192}
}

\bib{nr}{article}{
Author = {Nabijou, Navid},
Author = {Ranganathan, Dhruv},
Title = {Gromov-Witten theory with maximal contacts },
Year = {2019},
status = {available at \href{https://arxiv.org/abs/1908.04706}
{arXiv:1908.04706 [math.AG]}}
}

\bib{narr}{article}{
   author={Nart, Enric},
   author={Ritzenthaler, Christophe},
   title={Non-hyperelliptic curves of genus three over finite fields of
   characteristic two},
   journal={J. Number Theory},
   volume={116},
   date={2006},
   number={2},
   pages={443--473}
}

\bib{par}{article}{
   author={Pardini, Rita},
   title={Some remarks on plane curves over fields of finite characteristic},
   journal={Compositio Math.},
   volume={60},
   date={1986},
   number={1},
   pages={3--17}
}

\bib{plu}{article}{
   author={Pl\"{u}cker, Julius},
   title={Solution d'une question fondamentale concernant la th\'{e}orie
   g\'{e}n\'{e}rale des courbes},
   language={French},
   journal={J. Reine Angew. Math.},
   volume={12},
   date={1834},
   pages={105--108}
}

\bib{der0}{article}{
   author={de Resmini, Marialuisa J.},
   title={Sulle quartiche piane sopra un campo di caratteristica due},
   language={Italian},
   journal={Ricerche Mat.},
   volume={19},
   date={1970},
   pages={133--160}
}

\bib{der}{article}{
   author={de Resmini, Marialuisa J.},
   title={On quartics in a plane over a field of characteristic $2$},
   conference={
      title={Atti del Convegno di Geometria Combinatoria e sue Applicazioni},
      address={Univ. Perugia, Perugia},
      date={1970},
   },
   book={
      publisher={Ist. Mat., Univ. Perugia, Perugia},
   },
   date={1971},
   pages={187--197}
}

\bib{serre}{article}{
   author={Serre, Jean-Pierre},
   title={Sur la topologie des vari\'{e}t\'{e}s alg\'{e}briques en caract\'{e}ristique $p$},
   language={French},
   conference={
      title={Symposium internacional de topolog\'{\i}a algebraica International
      symposium on algebraic topology},
   },
   book={
      publisher={Universidad Nacional Aut\'{o}noma de M\'{e}xico and UNESCO, Mexico
   City},
   },
   date={1958},
   pages={24--53}
}

\bib{sv}{article}{
   author={St\"{o}hr, Karl-Otto},
   author={Voloch, Jos\'{e} Felipe},
   title={A formula for the Cartier operator on plane algebraic curves},
   journal={J. Reine Angew. Math.},
   volume={377},
   date={1987},
   pages={49--64}
}

\bib{vcon}{article}{
   author={Vainsencher, Israel},
   title={Conics in characteristic $2$},
   journal={Compositio Math.},
   volume={36},
   date={1978},
   number={1},
   pages={101--112}
}

\bib{vdj}{article}{
   author={Vainsencher, Israel},
   title={Counting divisors with prescribed singularities},
   journal={Trans. Amer. Math. Soc.},
   volume={267},
   date={1981},
   number={2},
   pages={399--422}
}

\bib{wall}{article}{
   author={Wall, C. T. C.},
   title={Quartic curves in characteristic $2$},
   journal={Math. Proc. Cambridge Philos. Soc.},
   volume={117},
   date={1995},
   number={3},
   pages={393--414}
}

\bib{zh}{article}{
   author={Zharkov, Ilia},
   title={Tropical theta characteristics},
   conference={
      title={Mirror symmetry and tropical geometry},
   },
   book={
      series={Contemp. Math.},
      volume={527},
      publisher={Amer. Math. Soc., Providence, RI},
   },
   date={2010},
   pages={165--168}
}

\end{biblist}
\end{bibdiv}
\end{document}